\documentclass[11pt,a4paper]{article} 

\usepackage{amsfonts, amsmath, wasysym}
\usepackage{stmaryrd} 

\usepackage{tikz}
\usetikzlibrary{shapes, positioning, patterns}

\usepackage{amssymb,amsthm,
paralist
}

\usepackage{
latexsym,
}

\usepackage{url}

\definecolor{darkgreen}{rgb}{0,0.5,0}
\definecolor{darkred}{rgb}{0.7,0,0}
\usepackage[colorlinks, 
citecolor=darkgreen, linkcolor=darkred
]{hyperref}

\usepackage[a4paper, margin=1in]{geometry}

\theoremstyle{plain}

\numberwithin{equation}{section}

\newcommand{\m}{\ensuremath{{\mathcal M}}}

\newcommand{\g}{\ensuremath{{\mathcal G}}}

\newcommand{\ci}{\ensuremath{{\mathcal I}}}

\newcommand{\calr}{\ensuremath{{\mathcal R}}}

\newcommand{\pl}[2]{{\frac{\partial #1}{\partial #2}}}

\newcommand{\al}{\alpha}
\newcommand{\be}{\beta}
\newcommand{\ga}{\gamma}

\newcommand{\de}{\delta}
\newcommand{\om}{\omega}

\newcommand{\la}{\lambda}

\newcommand{\si}{\sigma}

\newcommand{\ep}{\varepsilon}
\newcommand{\e}{\varepsilon}

\newcommand{\R}{\ensuremath{{\mathbb R}}}
\newcommand{\N}{\ensuremath{{\mathbb N}}}

\newcommand{\C}{\ensuremath{{\mathbb C}}}

\newcommand{\downto}{\downarrow}

\newcommand{\lap}{\Delta}

\newcommand{\grad}{\nabla}

\newcommand{\union}{\cup}

\DeclareMathOperator{\Vol}{Vol}
\DeclareMathOperator{\VolB}{VolB}

\newcommand{\beq}{\begin{equation}}
\newcommand{\beql}[1]{\begin{equation}\label{#1}}
\newcommand{\eeq}{\end{equation}}
\newcommand{\beqa}{\begin{equation}\begin{aligned}}
\newcommand{\eeqa}{\end{aligned}\end{equation}}
\newcommand{\brmk}{\begin{rmk}}
\newcommand{\ermk}{\end{rmk}}
\newcommand{\partref}[1]{\hbox{(\csname @roman\endcsname{\ref{#1}})}}
\newcommand{\half}{\frac{1}{2}}

\newcommand{\Ric}{{\mathrm{Ric}}}
\newcommand{\Scal}{{\mathrm{Scal}}}

\usepackage{soul}

 \newtheorem{thm}{Theorem}[section]

\newtheorem{lem}[thm]{Lemma}

\newtheorem{defn}[thm]{Definition}
\newtheorem{rmk}[thm]{Remark}

\newcommand{\CPICo}{{\ensuremath{\mathrm{C_{PIC1}}}}}
\newcommand{\CPICt}{\ensuremath{\mathrm{C_{PIC2}}}}

\usepackage{varwidth}

\title{\Large PIC1 PINCHED MANIFOLDS ARE FLAT OR COMPACT\footnote{MSC 2020: 53E20, 53C20.}}
\author{Alix Deruelle, Man-Chun Lee, Felix Schulze, Miles Simon and Peter M. Topping}
\date{20 March 2026}

\begin{document}

%

\parskip 8pt
\parindent 0pt

\maketitle

\begin{abstract}
\noindent Hamilton's pinching conjecture, that three-dimensional complete non-compact manifolds with pinched Ricci curvature are flat, has recently been resolved using Ricci flow. In this paper we prove a direct analogue of that result in all dimensions. In order to do so we develop a lifting technique that allows us to handle manifolds that are collapsed at infinity. This new method also gives an alternative way of handling collapsed manifolds in the known three-dimensional case. As part of this approach, we prove a Ricci flow curvature estimate of a type that would normally be derived from the Harnack inequality, but without requiring the strong curvature positivity hypothesis demanded by Harnack. We give an improved gap theorem as a further application.
\end{abstract}

\section{Introduction}
\label{intro}

One of the central themes in differential geometry is to consider the global topological and geometric consequences of pointwise curvature conditions, typically pointwise curvature bounds or pointwise curvature pinching. 
Since 1982, Ricci flow has been at the heart of this topic. Hamilton's initial application of this flow was to prove that closed 3-manifolds of positive Ricci curvature are diffeomorphic to space forms. Advances in our knowledge of how curvature evolves under Ricci flow, due to B\"ohm and Wilking \cite{BohmWilking2008}, led to a sphere theorem in general dimensions, assuming 
2-nonnegative curvature operator (see \cite{PIC1_survey} for a map of curvature conditions). These ideas were developed 
further by Brendle and Schoen \cite{BS} to give a sphere theorem for positive complex sectional curvature, and hence strictly quarter-pinched sectional curvature.

One would like to find the most general notion of positive curvature that gives a sphere theorem of the same type. The current state of the art is the so-called PIC1 curvature condition. The concept of PIC1
originates in the work of Micallef and Moore \cite{MM} and came to prominence 
in the work of Brendle and Schoen \cite{BS}; 
we give a modern 
definition of this condition in Section \ref{PIC1_sect} and briefly recall some of its properties.
Loosely speaking, PIC1 seems to be the weakest known positive curvature condition that prevents any nontrivial topology; see \cite{PIC1_survey} for some conjectures in this direction.
Building on the theory behind the three sphere theorems mentioned above, Brendle \cite{brendlePIC1} proved the following generalisation of all three of these sphere theorems.
\begin{thm}[Brendle \cite{brendlePIC1}, PIC1 sphere theorem]
\label{PIC1_sphere_theorem}
Suppose $(M,g_0)$ is a closed Riemannian manifold that is PIC1. 
Then $M$ is diffeomorphic to a spherical space form.
\end{thm}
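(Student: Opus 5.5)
The plan is to follow Brendle's strategy: run Ricci flow from $g_0$, show that the PIC1 condition is preserved and pinches, and conclude that suitably rescaled metrics converge to a constant curvature metric. First, since $M$ is closed and PIC1 is an open condition, $g_0$ lies in the interior of the PIC1 cone. I will invoke Hamilton's short-time existence and follow the flow $g(t)$ up to its maximal time $T<\infty$. By Hamilton's maximum principle for the curvature ODE $\frac{d}{dt}R = Q(R)=R^2+R^\#$, PIC1 is preserved, because the PIC1 cone is invariant under $Q$. That invariance is a Brendle--Schoen/Nguyen type computation, which I take as known.

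The core step is to construct a continuous family of closed, convex, $O(n)$-invariant cones $\mathcal C(s)$, $s\in[0,\infty)$, in the space of algebraic curvature operators. They must satisfy three conditions: $\mathcal C(0)$ contains a compact neighbourhood of the PIC1 condition relevant to $g_0$; each $\mathcal C(s)$ is transversally invariant under $Q$, so that $Q(R)$ lies in the interior of the tangent cone of $\mathcal C(s)$ at boundary points $R$; and $\mathcal C(s)$ converges to the ray of constant curvature operators $\{\lambda\,\mathrm{id}:\lambda\ge 0\}$ as $s\to\infty$. I would build the family by applying Böhm--Wilking linear transformations $\ell_{a,b}(R)=R+2(n-1)a\,\mathrm{Ric}_0\wedge\mathrm{id}+\frac{2}{n}b\,\mathrm{Scal}\,\mathrm{id}\wedge\mathrm{id}$ to the PIC1 cone, intersected with PIC2-type cones. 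In the first stage I would carefully choose $a,b$ so that the pulled-back cone stays invariant under the transformed ODE $\ell^{-1}Q(\ell R)$; a gluing argument (Böhm--Wilking's Theorem 5.8) then extends the pinching family all the way to the constant-curvature ray. From this family Böhm--Wilking's abstract result yields a pinching set. Since $M$ is compact, $R(g(t))$ stays in it, so in regions of large curvature $|R-\frac{\mathrm{Scal}}{n(n-1)}\mathrm{id}|/\mathrm{Scal}\to 0$.

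To conclude, as $t\to T$ the curvature blows up everywhere at comparable rates, by Hamilton's pinching argument and Shi's estimates. The normalized flow then converges smoothly to a metric of constant positive sectional curvature. Hence $M$ is diffeomorphic to $S^n/\Gamma$ with $\Gamma$ acting freely by isometries, which is a spherical space form. An alternative ending uses the Brendle--Schoen convergence theorem directly once the flow enters a PIC2-pinched cone, since PIC1 with pinching eventually forces PIC2.

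I expect the main obstacle to be the first stage of the cone construction, namely finding the parameters $a,b$ for which the Böhm--Wilking deformed PIC1 cone is preserved. This needs the delicate algebraic inequality, specific to PIC1, relating $Q(R)$ and $R^\#$ on the boundary of the cone. I would first try to reduce it to checking the invariance condition on four-frames, using the complexified sectional curvature formulation of PIC1. Then I would verify the sign of the reaction term there, where the extra $\mathrm{Ric}_0$ terms must be absorbed using that $R$ lies in PIC1.
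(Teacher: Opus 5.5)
The paper gives no proof of this theorem (it is quoted from Brendle), so the comparison is with Brendle's argument. Your outline has the right architecture: invariance of $\CPICo$ via PIC of $M\times\R$, a family of cones starting at $\CPICo$, the B\"ohm--Wilking pinching set, and Hamilton's convergence argument. But there is a genuine gap. The step that carries the theorem is handed to an unspecified ``gluing argument'', while the step you single out as the main obstacle is routine. Take B\"ohm--Wilking's normalisation $\ell_{a,b}(R)=R+\frac{2a}{n}\Scal\,\ci+2b\,\mathring{\Ric}\wedge\mathrm{id}$; your formula mixes up the two coefficients. With $2a=2b+(n-2)b^2$ and $0<b\le b_{\max}=\frac{\sqrt{2n(n-2)+4}-2}{n(n-2)}$, their formula for $\ell_{a,b}^{-1}Q(\ell_{a,b}R)-Q(R)$ reduces to $2a\,\Ric\wedge\Ric+2b^2\,\mathring{\Ric}^2\wedge\mathrm{id}+c\,|\mathring{\Ric}|^2\,\ci$, where $c=\frac{b^2(2-4b-n(n-2)b^2)}{n+2n(n-1)a}\ge 0$. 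For $0\neq R\in\CPICo$ we have $\Ric\ge 0$ and $\Scal>0$, so this is a positive definite curvature operator and lies in the interior of $\CPICo$. Together with the $Q$-invariance of $\CPICo$, which you already take as known, this gives transversal invariance of $\ell_{a,b}(\CPICo)$. No further PIC1-specific algebra and no four-frame computation is needed.

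The real work starts at $b=b_{\max}$. The cones $\ell_{a,b}(\CPICo)$ are images of one fixed cone under uniformly bounded invertible linear maps, so they stay away from the ray $\{\lambda\,\ci:\lambda\ge 0\}$. They also do not land in the starting cone of a known family.

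For example, take $n=4$ and let $R$ be diagonal in the basis $\{e_i\wedge e_j\}$, with eigenvalue $-1$ on $e_1\wedge e_2$ and $e_3\wedge e_4$ and eigenvalue $1$ on the other four. It is Einstein with $\Scal=4$. It lies in $\CPICo$: its blocks on $\Lambda^+$ and $\Lambda^-$ are both $\mathrm{diag}(-1,1,1)$ with no mixed block, and the simple isotropic $\omega$ are exactly sums of null vectors of $\Lambda^+\otimes\C$ and $\Lambda^-\otimes\C$. Yet $\ell_{a,b_{\max}}(R)=R+2a\,\ci$, with $2a=\frac{1+\sqrt5}{4}<1$, still has negative sectional curvature on $e_1\wedge e_2$ and is not 2-nonnegative. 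So it lies neither in $\CPICt$ nor in the 2-nonnegative cone, where the Brendle--Schoen and B\"ohm--Wilking families start.

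To reach the ray you must shrink the cones with further constraints, for instance Ricci pinching $\Ric\ge p\,\Scal$ with $p$ increasing. You then have to prove transversal invariance at the new boundary points. B\"ohm--Wilking's version of this relies on 2-nonnegativity, which $\CPICo$ does not give for $n\ge 4$. Supplying an argument that works for PIC1 tensors is the substance of Brendle's proof, and your proposal contains nothing toward it. Your alternative ending has the same gap: the flow eventually becoming PIC2 is a consequence of the missing pinching, not an input to it.
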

In contrast, this paper considers the more general situation that the manifold is not assumed to be closed, but strengthens the curvature positivity hypothesis to a curvature pinching hypothesis. The expected global consequences are now different, as we illustrate in a moment. However, the frontier curvature condition in this context also turns out to involve the notion of PIC1.

\subsection{Pinching theorems}
\label{pinch_thm_sect}

A conjecture generally attributed to Hamilton \cite[Conjecture 3.39]{CLN09}, but apparently also considered by Willmore in the 1980s \cite{gerhard_personal}, is that Ricci pinched 3-manifolds are either flat or compact. Building on earlier work of Chen-Zhu \cite{ChenZhu2000}, this conjecture was recently solved by a combination of the works of the present authors, together with earlier work of Lott \cite{Lott2019}:
\begin{thm}[{Hamilton's pinching conjecture, \cite{DeruelleSchulzeSimon2025}, \cite{LeeTopping2022} and \cite{Lott2019}}]
\label{3D_thm}
Suppose $(M^3,g_0)$ is a complete three-dimensional Riemannian manifold with $\Ric_{g_0}\geq \ep\, \Scal_{g_0}\geq 0$ for some $\ep>0$. Then  $(M^3,g_0)$ is either flat or  compact.
\end{thm}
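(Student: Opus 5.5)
We argue by contradiction, using Ricci flow. Suppose $(M^3,g_0)$ is complete, non-compact, not flat, and satisfies $\Ric_{g_0}\geq \ep\,\Scal_{g_0}\geq 0$. The plan has three steps:
\begin{enumerate}
\item construct a complete Ricci flow $g(t)$, $t\in[0,\infty)$, starting at $g_0$;
\item show that the pinching is preserved along the flow and that the flow is immortal with $|\Rm|_{g(t)}\leq C/t$;
\item blow down the flow to obtain a non-flat, Ricci pinched expanding soliton (or limit flow) emerging from a cone, and rule this out.
\end{enumerate}

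\textbf{Existence.} In the non-collapsed case (Euclidean volume growth) one can use the Lee--Topping existence theory for Ricci flow from pinched initial data with possibly unbounded curvature. The difficulty is that $g_0$ need not have bounded curvature or be non-collapsed. Following \cite{LeeTopping2022}, I would first treat initial data with a lower volume bound on unit balls. In that setting, local Ricci flow with local curvature estimates $|\Rm|\leq c_0/t$ can be patched together via a pseudolocality and shrinking-balls argument. In 3D the pinching condition $\Ric\geq\ep\Scal$ is preserved (Hamilton's pinching estimate, localised), which yields a complete immortal flow with $|\Rm|\leq C/t$ for all $t>0$. The $C/t$ bound for large $t$ is the key global estimate: it comes from a contradiction argument in which rescaled limits would be non-flat pinched ancient or eternal solutions, which are excluded by Hamilton's pinching improvement on $\Scal\cdot$(traceless Ricci) together with Chen--Zhu type arguments.

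\textbf{Volume growth and blow-down.} Bishop--Gromov together with $\Ric\geq 0$ gives control of volume ratios. In the non-collapsed case, $\mathrm{AVR}(g_0)>0$ and Lott's result \cite{Lott2019} identifies blow-downs $g_k(t)=k^{-1}g(kt)$ as converging to an expanding gradient soliton coming out of a cone over a smooth $S^2$. Pinching passes to the limit. For pinched 3D expanders, a maximum principle argument on $|\mathring{\Ric}|^2/\Scal^{2-\delta}$, valid on the soliton thanks to its quadratic decay at infinity, forces $\mathring{\Ric}\equiv 0$. Hence the expander is flat, and then so is $g_0$ by rigidity of the asymptotic cone, which is a contradiction. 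This is essentially \cite{DeruelleSchulzeSimon2025}.

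\textbf{Main obstacle: collapsed case.} When $\mathrm{AVR}(g_0)=0$, the blow-down is collapsed and no smooth limit exists. I would first try to pass to local covers, lifting to balls in tangent spaces via exponential maps on scales where $|\Rm|\leq C/t$. This unwraps the collapsing, gives non-collapsed local flows, and allows one to extract a limit. That limit is a pinched, non-flat flow with $|\Rm|\leq C/t$ which is locally non-collapsed, and the soliton argument above can be run on it to reach a contradiction.

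The delicate point is to ensure the lifted limit is still non-flat and satisfies a scale-invariant curvature lower bound. This requires showing that $t\,\Scal(x,t)$ stays bounded below at suitable points, a Harnack-type estimate which must be obtained without the curvature positivity needed for Hamilton's Harnack inequality. I expect this to be the hardest step.
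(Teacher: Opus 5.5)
\textbf{Gap 1: the Harnack-type lower bound has no mechanism.} Your architecture matches the paper's, which obtains this statement as the case $n=3$ of Theorem \ref{main_thm}. It has three stages: an immortal Lee--Topping flow with preserved pinching and $|K|\leq c_0/t$, exponential-map lifts to undo collapse, and a non-collapsed rigidity result. But the step you defer as ``the hardest'' is the central new ingredient, and you give no way of proving it. What is needed is: if $\Ric_{g_0}(x_0)>0$ then $\liminf_{t\to\infty}t\,\Scal_{g(t)}(x_0)>0$, using only $\Ric\geq 0$ and $|K|\leq c_0/t$.

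The paper gets this from volume control rather than a Harnack inequality:
\begin{enumerate}
\item After rescaling, $B_{g_0}(x_0,1)$ is an $\eta$-Ricci ball. Hence the blow-downs $g_m$ and their lifts $h_m$ contain $\eta$-Ricci balls of radius $R_m^{-1}$.
\item Comparison with the sphere caps the volume ratio of these balls at $1-2\de$. Bishop--Gromov with $\Ric\geq 0$ then gives $\VolB_{h_m(0)}(0,r)\leq (1-2\de)\om_n r^n$ for \emph{every} $r\in[R_m^{-1},1)$.
\item Lemma \ref{Ricci_balls_vol_suppression_lem} (shrinking balls plus monotone decrease of volume) keeps the ratio below $1-\de$ for $t\leq \rho r^2$. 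These bounds hold at arbitrarily small scales and survive the limit $m\to\infty$ and the subsequent blow-up.
\item If the $\liminf$ vanished, then $\Scal_{h_\infty(c_0)}(0)=0$. The strong maximum principle makes the limit static and Ricci flat, and a smooth static metric cannot have volume ratio at most $1-\de$ as $r\to 0$.
\end{enumerate}
Without an argument of this kind, nothing stops your lifted limit from being flat, and no contradiction is reached.

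\textbf{Gap 2: the lifted limit is incomplete.} You propose to run ``the soliton argument'' on the lifted limit. That limit is a flow on the Euclidean ball $\mathbb{B}_1$ and is incomplete, so neither blow-downs nor a maximum principle using behaviour at infinity is available.

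The paper instead blows the local limit \emph{up} at the origin, setting $H_m(t)=m^2 h_\infty(m^{-2}t)$, and uses Cheeger--Gromov--Hamilton compactness to obtain a complete flow $H_\infty(t)$. For this flow to have $\mathrm{AVR}>0$, you need the scale-invariant non-collapsing estimate $\VolB_{h_m(t)}(0,r)\geq v_0 r^n$ for all $r\in(0,1]$ and all $t\in[0,c_0]$. It must hold uniformly down to $t=0$, where there is no curvature control. The paper gets it from the eternally shrinking balls lemma \ref{eternally_shrinking_balls_lem}, which gives $B_{h(t_0)}(0,r_0)\subset B_{h(s)}(0,1)$ for all $s\in[0,t_0]$, combined with volume monotonicity and Bishop--Gromov.

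Only then can the positive-AVR rigidity theorem (Theorem \ref{DSS_AVR_thm}, the analogue of your expander argument) be applied. It forces $H_\infty$ to be static Euclidean, contradicting $t\,\Scal_{H_\infty(t)}(p)\geq \la/2>0$.

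Separately, in the original three-dimensional proof, Lott's work is what handles exactly the collapsed case you single out as the obstacle. It is not the source of the identification of blow-downs as expanders.
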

Note that the condition $\Ric_{g_0}\geq \ep\, \Scal_{g_0}\geq 0$ is telling us that at each point in $M$, the eigenvalues of the Ricci curvature are all comparable.
After \cite{DeruelleSchulzeSimon2025} appeared, alternative approaches to that part were
given first in \cite{HK} and subsequently in e.g. \cite{BQOP}, \cite{BMOP} and  \cite{ChenXuZhang}.

In this paper we are concerned with establishing a direct analogue of this result in higher dimensions.
By considering the Eguchi-Hanson space, which is a 4-dimensional, complete Ricci flat manifold that is neither flat nor compact, we see that a naive restatement of the same conjecture in higher dimensions is not appropriate (although see \cite[Remark 3.1]{Ni_ancient} for a possible reformulation assuming also strictly positive scalar/Ricci 
curvature).
Instead we notice that non-negative Ricci curvature and Ricci pinching are
equivalent to (weakly) PIC1 curvature and PIC1 pinching, respectively, in three dimensions. 
It is the PIC1 pinching formulation of Theorem \ref{3D_thm} that we will generalise.

We are now in a position to state our main theorem. The $n=3$ case of the theorem below recovers
Theorem \ref{3D_thm}. Those unfamiliar with the notion of curvature cones should consult the review in Section \ref{PIC1_sect}; in particular, $\CPICo$ will be a cone within the vector space of algebraic curvature tensors so that a manifold $(M,g)$ being PIC1 is equivalent to its curvature tensor $\calr_g$ lying in the interior of $\CPICo$. The curvature tensor $\ci$ is the `identity' curvature tensor; see Section \ref{PIC1_sect}.
\begin{thm}[Main theorem]
\label{main_thm}
Suppose for $n\geq 3$ that $(M^n,g)$ is a complete Riemannian manifold that is PIC1 pinched in the sense that there exists $\ep\in (0,\frac{1}{n(n-1)})$ such that 
\beql{main_thm_pinching_hyp}
\calr_{g}-\e\, \Scal_{g} \cdot \ci\in \CPICo.
\eeq
Then $M$ is compact, or $g$ is flat (or both).
\end{thm}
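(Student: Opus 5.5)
We argue by contradiction: suppose $(M^n,g)$ is complete, non-compact and not flat, and satisfies the pinching hypothesis \eqref{main_thm_pinching_hyp}. We follow the strategy that resolved the three-dimensional case (Theorem \ref{3D_thm}).

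\textbf{Step 1: a Ricci flow with good curvature decay.} We first construct a complete Ricci flow $g(t)$, $t\in[0,\infty)$, with $g(0)=g$. Along this flow we want:
\begin{itemize}
\item $\calr_{g(t)}-\e'\Scal_{g(t)}\,\ci\in\CPICo$ for some $\e'>0$ depending on $\e,n$, so pinching is preserved;
\item the curvature decays like $|\mathrm{Rm}_{g(t)}|\le C/t$.
\end{itemize}
In the non-collapsed case (asymptotic volume ratio $\mathrm{AVR}>0$) this is a Lee--Topping type existence theory for PIC1 pinched initial data. Since $g$ has PIC1 curvature, it has non-negative Ricci curvature, so Bishop--Gromov volume comparison is available. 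Preservation of the pinching uses the Brendle/B\"ohm--Wilking pinching-family machinery together with a maximum principle on complete manifolds.

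\textbf{Step 2: non-collapsed case via blow-down.} Take a blow-down sequence $g_k(t)=\lambda_k^{-1}g(\lambda_k t)$ with $\lambda_k\to\infty$. Using the $C/t$ estimate and $\mathrm{AVR}>0$, these converge to an expanding solution $h(t)$ emanating from the asymptotic cone. The limit is still PIC1 pinched. We then need a Lott-type statement: such an expander, coming out of a cone, must be flat. With Step 3's estimate, a Harnack-free analogue of Hamilton's trace Harnack, $\partial_t(t\Scal)\ge0$ in an averaged sense, should force $t\Scal$ to be scale invariant. Combined with pinching, this yields a contradiction with non-flatness of $g$.

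\textbf{Step 3 (main obstacle): the collapsed case.} Here the volume ratio tends to zero, and I would use the lifting idea mentioned in the abstract. Fix a point $x$ and scale $r$ at which $g$ looks collapsed. Pull the metric back by the exponential map to a ball in $T_xM$, where it is non-collapsed. Run the local Ricci flow there, using local existence with pseudolocality-type estimates under PIC1 pinching. Then establish a local curvature estimate $|\mathrm{Rm}|\le C/t$ with $C$ independent of the collapsing. This is the Harnack-type bound the abstract refers to, and it must be derived without the strong positivity Harnack requires. I would try a blow-up/contradiction argument: a failure of the estimate produces, after rescaling and lifting, a non-flat PIC1 pinched ancient or eternal limit. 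Brendle's classification of pinched ancient solutions (they are compact quotients of spheres) then rules this out, since a limit arising from blow-up at scales going to infinity cannot be compact.

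\textbf{Step 4: conclusion.} With uniform estimates on the lifted flows, pass to a limit at large scales. This gives a non-collapsed pinched solution, to which Step 2 applies, forcing $g$ to be flat. That contradicts our assumption, so $M$ is compact or $g$ is flat. I expect Step 3, especially controlling the flow uniformly on lifts without global $\mathrm{AVR}>0$, to be the hardest part.
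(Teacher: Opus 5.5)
There is a genuine gap, located exactly where the paper's new idea is needed. Suppose Steps 3--4 succeed and you extract, from lifted blow-downs, a complete non-collapsed pinched limit flow. Theorem \ref{DSS_AVR_thm} then says this limit is static Euclidean space. That contradicts nothing unless you know the non-flatness of $g$ survives the blow-down, i.e.\ some lower bound like $\liminf_{t\to\infty}t\,\Scal_{g(t)}(x_0)>0$. Without Harnack, the curvature could decay faster than $1/t$ and the limit would be flat for free. After lifting, the limit's large-scale geometry carries no information about $\mathrm{AVR}(g)$, so your Step 4 conclusion ``forcing $g$ to be flat'' does not follow. In Step 2 you gesture at an averaged trace Harnack but give no mechanism. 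In Step 3 you identify the Harnack-type estimate with the \emph{upper} bound $|\mathrm{Rm}|\le C/t$, but Theorem \ref{Thm:existence} already provides that globally with no volume hypothesis. So your case split and the blow-up/classification argument are unnecessary, while the estimate that is actually needed is missing.

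That missing ingredient is Theorem \ref{lifted_flow_app}(i). Non-flatness plus pinching give $\Ric_g(x_0)>0$, hence an $\eta$-Ricci ball at $x_0$. On the lifted, blown-down initial metrics, Lemma \ref{Miles_Ricci_balls_lem} and Bishop--Gromov give $\VolB_{h_m(0)}(0,r)\le(1-2\de)\om_n r^n$ for \emph{all} $r\in[R_m^{-1},1)$. Lemma \ref{Ricci_balls_vol_suppression_lem} propagates this bound, with $1-\de$, to times $t\le\rho r^2$. The bound survives both limits. As $r\downarrow 0$ it is incompatible with the limit being static, whether Ricci flat (when $t\,\Scal\to 0$ along the sequence) or Euclidean.

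Your lifting step also fails as stated. You pull back the \emph{initial} metric by its own exponential map at a collapsed scale and then run a new local flow. But $g$ has no curvature bound, so its exponential map need not be a local diffeomorphism on any ball of controlled size, let alone at the large scales relevant to blow-downs. Nor is there a local existence theory with collapse-independent estimates from such rough lifted data. The paper instead uses the exponential map of $g(t_0)$ at a \emph{positive} time, where $|K|\le c_0/t_0$ bounds the conjugate radius from below. It then pulls back the whole existing flow, including $t=0$, by this fixed local diffeomorphism. Non-collapsing at earlier times follows from the eternally shrinking balls lemma \ref{eternally_shrinking_balls_lem}, together with the decrease of volumes of fixed sets when $\Ric\ge 0$.
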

As explained in Remark \ref{pinch_trace} below,
the restriction $\e<\frac{1}{n(n-1)}$, together with the pinching hypothesis \eqref{main_thm_pinching_hyp}, implies that 
$\Scal_{g_0}\geq 0$, which then makes \eqref{main_thm_pinching_hyp} stronger than asking
that $\calr_{g}\in \CPICo$, and consequently guarantees that $\Ric_g\geq 0$.
Theorem \ref{main_thm} was explicitly proposed in \cite[Remark 1.4]{LeeTopping2022_PIC1}, 
and was later listed as Problem (j) in \cite[Section 7]{milessurvey}, 
but is a natural generalisation of several earlier theorems. 
The following results have hypotheses that are a strict superset of those in Theorem \ref{main_thm}.

\begin{thm}[{Brendle-Schoen \cite[Theorem 7.4]{BSsurvey}}]
In the setting of Theorem \ref{main_thm}, the same conclusions follow if we additionally assume:
\begin{compactenum}
\item
$(M,g)$ has the stronger notion of PIC2 pinching, i.e., $\calr_{g}-\e\, \Scal_{g} \cdot \ci$ lies in the curvature cone corresponding to non-negative complex sectional curvature\footnote{See \cite{PIC1_survey} for a discussion of PIC2.}, 
\item 
$(M,g)$ has uniformly bounded sectional curvature, and
\item
$(M,g)$ has strictly positive scalar curvature.
\end{compactenum}
\end{thm}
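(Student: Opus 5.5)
The statement is a theorem of Brendle and Schoen, quoted as a special case of Theorem \ref{main_thm}. I would prove it directly by running Ricci flow from $g$ and combining a pinching estimate with an asymptotic volume argument. Throughout, assume $M$ is non-compact; the goal is a contradiction, since flatness is excluded by $\Scal_g>0$.

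\textbf{Step 1: existence and preservation of pinching.} Because the sectional curvature is bounded, Shi's theorem gives a complete bounded-curvature Ricci flow $g(t)$ on some interval $[0,T)$ with $g(0)=g$. The cone $\CPICt$ is preserved by the flow, by Brendle--Schoen. Pinched subcones of the form $\{\calr : \calr-\e\,\Scal\cdot\ci\in\CPICt\}$ can be shown to lie in invariant families via the Böhm--Wilking construction. The maximum principle for complete bounded-curvature flows then shows that $g(t)$ stays PIC2 pinched, possibly with a smaller constant $\e'>0$. Moreover, the Böhm--Wilking/Brendle--Schoen pinching family yields improving pinching: at points where the curvature is large, $\calr$ becomes close to a positive multiple of $\ci$.

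\textbf{Step 2: long-time behaviour.} PIC2 pinching gives $\Ric\ge c\,\Scal\, g$ with $\Scal>0$. By Myers-type arguments with Hamilton's curvature-decay reasoning, the flow cannot be immortal with controlled curvature unless it is non-compact with decaying curvature. The key tool is Hamilton's Harnack inequality, which is available since PIC2 implies nonnegative complex sectional curvature and hence nonnegative curvature operator after the Brendle--Schoen reduction. Combining Harnack with the pinching, I would show that the flow either develops a singularity at some finite time $T$ or exists forever with $\Scal\le C/t$.

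\textbf{Step 3: the two cases.}

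\emph{Finite $T$.} Blow up at a point of maximal curvature; Harnack lets us pick type I or type II points. The limit is an ancient solution that is pinched and, by Step 1, has curvature close to a multiple of $\ci$. By Hamilton/Huisken-type rigidity it is therefore a shrinking round sphere quotient, hence compact. A compact blow-up limit forces $M$ itself to be compact, a contradiction.

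\emph{Immortal.} Consider blow-downs $g_k(t)=\lambda_k^{-1}g(\lambda_k t)$ with $\lambda_k\to\infty$. Pinched nonnegative curvature together with Harnack gives non-collapsing. Indeed, for pinched manifolds the asymptotic volume ratio is positive: pinched curvature of size $K$ on large balls, combined with Bishop--Gromov and the Cheeger--Gromoll soul structure (PIC2 implies nonnegative sectional curvature), gives this. The blow-down limit is then an expanding gradient soliton that is pinched. I would then apply a pinching estimate of Chen--Zhu type, showing that $\Scal\cdot t$ must tend to zero with the pinching constant. This forces the limit to be flat, contradicting the strict positivity and scaling of $\Scal$.

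\textbf{Main obstacle.} The hardest step is the immortal case: ruling out expanding-soliton limits and establishing non-collapsing at infinity. I would first try to prove positive asymptotic volume ratio from the pinching via Cheeger--Gromoll. Then I would derive a contradiction from the asymptotic scalar-curvature ratio of the expander, following Chen--Zhu's three-dimensional argument, with the PIC2 pinching supplying the needed Ricci pinching in all dimensions.
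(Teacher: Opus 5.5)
In the paper this statement is not proved separately. It is quoted from Brendle--Schoen, and since its hypotheses contain all those of Theorem~\ref{main_thm}, it is an immediate special case of that theorem. Your direct route (Shi's flow, preserved and improving pinching, Harnack, blow-down to a pinched expander) is the classical Chen--Zhu/Ni--Wu/Brendle--Schoen strategy. However, the step everything rests on, non-collapsing, is not justified.

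\begin{itemize}
\item Bishop--Gromov only bounds volume ratios from \emph{above} at large scales.
\item The soul theorem gives structural information but no lower volume bound.
\item Harnack controls $t\,\Scal_{g(t)}$, not volumes.
\end{itemize}

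For example, a paraboloid of revolution in $\R^{n+1}$ has bounded positive sectional curvature, is diffeomorphic to $\R^n$, and has zero asymptotic volume ratio. So the pinching would have to be used in an essential way, and you do not say how. This is exactly the collapsing problem the paper singles out as the central difficulty; it handles it with the lifting Lemma~\ref{lifted_flow_lemma} inside Theorem~\ref{lifted_flow_app}.

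In your PIC2 setting, the missing classical ingredient is the Gromoll--Meyer estimate. Since $\CPICt$ lies in the cone of nonnegative sectional curvature, preserved PIC2 pinching gives $K_{g(t)}\geq \ep'\,\Scal_{g(t)}>0$. Hence $\mathrm{inj}_{g(t)}\geq \pi/\sqrt{\sup K_{g(t)}}$, and combined with $|K|_{g(t)}\leq C/t$ this yields $\mathrm{inj}_{g(t)}\geq c\sqrt{t}$. You can then transfer these volume lower bounds back to time $0$ as in the proof of Lemma~\ref{lifted_flow_lemma}. Lengths and volumes of fixed sets decrease since $\Ric\geq 0$, and Lemma~\ref{eternally_shrinking_balls_lem} supplies the ball inclusions. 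This gives even $\mathrm{AVR}(g)>0$. Theorem~\ref{DSS_AVR_thm} (PIC2 pinching implies PIC1 pinching) then finishes, replacing your unspecified Chen--Zhu-type final step.

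Two further points need repair. First, PIC2 does not imply nonnegative curvature operator, so Hamilton's Harnack inequality is not available as you claim. What you need is Brendle's Harnack inequality \cite{Brendle2009}, valid under nonnegative complex sectional curvature.

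Second, and more seriously, your Step~2 dichotomy (finite-time singularity, or immortal with $\Scal_{g(t)}\leq C/t$) is a type III estimate that you assert rather than prove. An immortal flow could a priori be of type IIb. Excluding this needs its own blow-up argument: non-collapsing again, then an eternal pinched limit that must be ruled out. The Gromoll--Meyer repair above depends on having this decay.

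In the paper's framework neither issue arises. Theorem~\ref{Thm:existence} directly supplies an immortal flow with $|K|_{g(t)}\leq c_0/t$ from PIC1-pinched data. Theorem~\ref{lifted_flow_app} then needs no bounded curvature, no positive sectional curvature and no a priori volume information.
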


A by-product of the hypothesis of positive scalar curvature is that the manifold cannot be flat, so the conclusion here is that $M$ is compact.
This result extended earlier work of Ni and Wu \cite{NiWu2007} and Chen-Zhu \cite{ChenZhu2000}.

More recently, two of the present authors obtained:
\begin{thm}[{Lee-Topping \cite{LeeTopping2022_PIC1}}]
\label{PIC1_pinching_thm_LT}
In the setting of Theorem \ref{main_thm}, the same conclusions follow if we additionally assume that $(M,g)$ has nonnegative complex sectional curvature.
%
\end{thm}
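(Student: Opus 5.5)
We argue by contradiction. Suppose $(M^n,g_0)$ is complete and non-compact, is PIC1 pinched as in \eqref{main_thm_pinching_hyp}, additionally has nonnegative complex sectional curvature (i.e.\ is weakly PIC2), and is not flat. The plan is to run Ricci flow from $g_0$, show that the flow exists for all time and becomes pinched in a scale-invariant sense, and then reach a contradiction by a blow-down. Nonnegative complex sectional curvature is what makes the classical toolkit available. It is preserved under Ricci flow on complete manifolds, it gives Hamilton's trace Harnack inequality (Brendle's version for weakly PIC2), and it gives volume comparison via $\Ric\ge 0$.

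\textbf{Step 1: existence of the flow.} First we construct a complete Ricci flow $g(t)$, $t\in[0,\infty)$, starting at $g_0$. There are two cases.
\begin{compactenum}
\item If $g_0$ has bounded curvature, Shi's existence theorem gives a flow on some interval.
\item If not, we use the existence theory for weakly PIC2 (or PIC1) initial data with possibly unbounded curvature. This yields a flow with $|\mathrm{Rm}|\le C/t$ under non-collapsing.
\end{compactenum}
In the collapsed case we instead pass to local lifts, or to the universal cover where appropriate.

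\textbf{Step 2: preservation and improvement of pinching.} We show that the pinching cone condition \eqref{main_thm_pinching_hyp} is preserved along the flow. This uses a Hamilton maximum principle argument on non-compact manifolds, via the Brendle–Schoen/Böhm–Wilking pinching families. We then upgrade this to a pinching estimate that improves towards PIC2 or constant curvature at scales where the curvature is large relative to $1/t$. Combined with Harnack and $\Ric\ge 0$, this gives long-time existence with $|\mathrm{Rm}|\le C/t$.

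\textbf{Step 3: blow-down and contradiction.} Consider the rescalings $g_k(t)=\lambda_k^{-1}g(\lambda_k t)$ with $\lambda_k\to\infty$. The scale-invariant bound $t\,|\mathrm{Rm}|\le C$ and the preserved pinching pass to a limit. This limit is an expanding-type flow which, after rescaling, satisfies the pinching estimate with \emph{improving} constants: roughly, $\mathrm{Rm}$ approaches $\tfrac{\Scal}{n(n-1)}\ci$ up to error $o(\Scal)$ as $t\to\infty$.

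A pinched, nonnegatively curved manifold with positive asymptotic volume ratio would then be forced, by Bishop–Gromov and the asymptotics, to have $\Scal=0$. The expected route is as follows.
\begin{compactenum}
\item Show that $M$ has maximal volume growth using the Harnack inequality.
\item Derive from the pinching that the tangent cone at infinity of a limit is a smooth quotient of $\R^n$ with an expanding soliton structure.
\item Use the rigidity of pinched expanding solitons, which must be flat, to conclude $\Scal\equiv 0$ on $g(t)$.
\item Conclude that $g_0$ is flat by the strong maximum principle.
\end{compactenum}

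\textbf{Main obstacle.} The main difficulty is the collapsed case of Step 3, where $M$ has sub-Euclidean volume growth, for example a quotient with a large fundamental group at infinity. In that regime the blow-down limit need not exist as a smooth flow. The first thing we would try is to establish a decay estimate $\Scal(x,t)\le C/t$ together with $\Scal(x,t)\ge c/t$ near a basepoint, by Harnack. We would then integrate the pinching to show that $d_{g(t)}$ expands at least like $\sqrt t$ in all directions. This would force a volume-growth contradiction with the Bishop–Gromov bound unless the curvature vanishes.
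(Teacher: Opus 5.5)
Your outline has a genuine gap exactly where the whole difficulty lies: Step 3 in the collapsed case. A smooth blow-down limit of $\la_k^{-1}g(\la_k t)$ requires a uniform non-collapsing bound, and nothing you propose supplies it.

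Item (1), ``maximal volume growth from the Harnack inequality'', has no mechanism behind it. Brendle's Harnack inequality gives that $t\Scal_{g(t)}(x_0)$ is nondecreasing. Hence $\liminf_{t\to\infty}t\Scal_{g(t)}(x_0)>0$ at any $x_0$ with $\Scal_{g_0}(x_0)>0$. That is its correct role, but it carries no information about volumes. Your fallback in the last paragraph is false: since $\Ric\ge 0$ we have $\partial_t g=-2\Ric\le 0$, so $d_{g(t)}$ is nonincreasing in $t$ and cannot expand like $\sqrt t$. Bishop--Gromov only bounds volume ratios from above, so it cannot rule out collapse either.

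The missing ingredient is the lifting of Lemma~\ref{lifted_flow_lemma}.
\begin{itemize}
\item For each blow-down $g_m$, pull back the \emph{entire} flow by the exponential map of $g_m(t_0)$ at $x_0$, for a fixed $t_0$ with $|K|_{g_m(t_0)}\le 1$. This map is a local diffeomorphism on $\mathbb{B}_\pi$.
\item The resulting incomplete flows $h_m$ satisfy $\VolB_{h_m(t)}(0,r)\ge v_0r^n$ uniformly for $t\in[0,c_0]$ and $r\in(0,1]$. The bound at $t_0$ comes from comparability of $h_m(t_0)$ with the Euclidean metric. It is carried back to earlier times by Lemma~\ref{eternally_shrinking_balls_lem} together with monotonicity of volume under $\Ric\ge0$.
\item Extract a limit on $\mathbb{B}_1\times(0,c_0]$ and blow it up at the origin. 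This gives a \emph{complete} $\e_0$-pinched flow with $|K|\le c_0/t$ and asymptotic volume ratio at least $v_0/\om_n$.
\item Theorem~\ref{DSS_AVR_thm} forces this flow to be static $\R^n$. But the bound $t\Scal\ge\frac12\liminf_{t\to\infty}t\Scal_{g(t)}(x_0)>0$ survives every step of the construction, a contradiction.
\end{itemize}
The paper only quotes this statement from Lee--Topping, but it is a special case of Theorem~\ref{main_thm}, and the argument above is in effect the paper's proof. Under your extra weakly PIC2 hypothesis, the Harnack inequality can replace part (i) of Theorem~\ref{lifted_flow_app}. Part (ii), the lifting, cannot be bypassed the way you propose.

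There are further problems.
\begin{itemize}
\item In Steps 1--2 you should simply invoke Theorem~\ref{Thm:existence}. It gives a complete immortal flow with $\calr_{g(t)}-\e_0\Scal_{g(t)}\ci\in\CPICo$ and $|K|_{g(t)}\le c_0/t$, with \emph{no} volume hypothesis. The constant $\e_0$ may be smaller than $\e$; preservation with the original $\e$ is not available.
\item Your alternatives for existence fail. Flows built on local lifts do not in general descend to $M$. Passing to the universal cover does not remove collapse, since simply connected manifolds with $K\ge0$ (e.g.\ a paraboloid) can have zero asymptotic volume ratio.
\item In Step 3 there is no ``improving pinching''. Estimates of the type $|\calr-\tfrac{\Scal}{n(n-1)}\ci|\le C\Scal^{1-\sigma}$ improve only where $\Scal$ is large. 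Here $\Scal\le C/t\to0$, so limits inherit only the $\e_0$-pinching.
\item The claim that a pinched manifold with positive asymptotic volume ratio is flat ``by Bishop--Gromov and the asymptotics'' is not an elementary step. It is the substantial Theorem~\ref{DSS_AVR_thm}, which you should cite.
\end{itemize}
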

The additional hypothesis here simplified part of the argument by permitting the use of Hamilton's Harnack inequality \cite{Ham_RF_Harnack}, in a form due to Brendle \cite{Brendle2009}, in a blow-down argument following Schulze-Simon \cite{SchulzeSimon2013},
cf. Chen-Zhu \cite{ChenZhu2000}.

More recently still, three of the present authors replaced  the hypothesis of 
nonnegative complex sectional curvature by the hypothesis of positive asymptotic volume ratio (see also the later work of Chan-Lee-Peachey 
\cite[Corollary 6.3]{ChanLeePeachey2024}). As a by-product of the positive asymptotic volume ratio, this then implies additionally that the manifold must be Euclidean space:
\begin{thm}[{Deruelle-Schulze-Simon \cite[Theorem 1.3]{DSS2}}]
\label{DSS_AVR_thm}
In the setting of Theorem \ref{main_thm}, 
if we additionally assume that $(M,g)$ satisfies 
$$\mathrm{AVR}(g):=\lim_{r\to\infty} \frac{\VolB_g(x_0,r)}{\omega_n r^n}>0,$$
where $\omega_n$ is the volume of Euclidean unit ball in $\R^n$,
then $(M,g)$ is isometric to Euclidean space.
\end{thm}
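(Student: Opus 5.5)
The statement to prove is Theorem \ref{DSS_AVR_thm}: under PIC1 pinching and positive asymptotic volume ratio, $(M,g)$ is Euclidean. The plan is to run Ricci flow from $g$ and show, via a blow-down argument, that if $g$ is not flat then a suitable rescaled limit is an expanding soliton whose pinching forces a contradiction.

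\textbf{Step 1: global flow.} Since $\Ric_g\geq 0$ and $\mathrm{AVR}(g)>0$, Bishop--Gromov gives $\VolB_g(x,r)\geq \mathrm{AVR}(g)\,\omega_n r^n$ for all $x$ and $r$. So $g$ is uniformly non-collapsed at all scales. Even without a curvature bound, I would invoke existence theory for Ricci flow from non-collapsed PIC1 initial data (in the style of pyramid/local Ricci flow constructions). This gives an immortal complete flow $g(t)$, $t\in(0,\infty)$, with the following properties:
\begin{compactenum}
\item $|\mathrm{Rm}|_{g(t)}\leq C_0/t$;
\item uniform non-collapsing $\VolB_{g(t)}(x,\sqrt t)\geq v_0 t^{n/2}$;
\item the pinching $\calr_{g(t)}-\e'\Scal_{g(t)}\ci\in\CPICo$ is preserved for some $\e'>0$, by the Brendle/B\"ohm--Wilking pinching-family arguments;
\item $\mathrm{AVR}(g(t))=\mathrm{AVR}(g)$.
\end{compactenum}
Local curvature estimates then give $g(t)\to g$ as $t\downarrow 0$.

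\textbf{Step 2: pinching improvement and blow-down.} Under PIC1 pinching the flow becomes asymptotically more pinched at scale $1/t$. Consider the parabolic rescalings $g_k(t)=k^{-1}g(kt)$ as $k\to\infty$. By the $C_0/t$ bound and non-collapsing, Hamilton compactness gives a limit flow $g_\infty(t)$ on $(0,\infty)$, which is again PIC1 pinched and has the same positive AVR.

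\textbf{Step 3: the main obstacle, and how to address it.} The hard step is showing the limit is an expanding gradient soliton, or more directly that it is flat, without Hamilton's Harnack inequality, which needs nonnegative complex sectional curvature. My first attempt would use a monotone quantity adapted to expanders, such as Feldman--Ilmanen--Ni expander entropy or a reduced-volume type functional based at infinity. Positive AVR and the $C/t$ bound keep it finite, and by scale invariance it forces the blow-down limit to be an expanding gradient soliton with $\Ric\geq0$ asymptotic to a cone. Pinching now finishes the argument on the expander: PIC1 pinching gives $\Ric\geq c\,\Scal\, g$. Combined with the soliton identities and the integral/decay estimate $\Scal\leq C/r^2$ on an asymptotically conical expander, a maximum-principle argument on $\Scal$ using $\Delta_f\Scal+\Scal/t+2|\Ric|^2=0$ together with $|\Ric|^2\geq c^2\Scal^2$ should show $\Scal\equiv0$. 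So the limit is flat, hence $\R^n$. Alternatively, there is a Chen--Zhu style route: pinching plus $\Scal\geq0$ yields a scale-invariant decay $\Scal(x,t)\,t\to 0$ uniformly, which contradicts the nonflat limit.

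\textbf{Step 4: conclusion.} With the blow-down limit flat, we get $\sup_M \Scal_{g(t)}\, t\to0$. An ODE/maximum-principle argument with pinching, namely $\partial_t \Scal\geq \Delta \Scal + c\Scal^2$ with $c=2/n$, then gives $\Scal_{g(t)}\leq 1/(ct)$ type bounds that can only be compatible with vanishing if $\Scal\equiv0$. Combining a Li--Yau/Chen--Zhu integral argument with AVR, i.e.\ $\int_{B(r)}\Scal\,dV\lesssim r^{n-2}$ versus the lower bound forced by pinching, gives $\Scal_{g(t)}\equiv0$ for all $t>0$. Then $\Ric\equiv 0$ and pinching gives $\calr\in\CPICo$ with zero scalar curvature, hence $\calr=0$. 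Letting $t\to0$, $g$ is flat. A complete flat manifold with $\mathrm{AVR}>0$ must be $\R^n$, since any nontrivial deck group would make the volume growth sub-Euclidean.
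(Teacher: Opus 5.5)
The paper does not prove this statement. It quotes it from \cite{DSS2} and uses it as a black box, for instance to identify $H_\infty(t)$ with static Euclidean space in Part (ii) of Theorem \ref{lifted_flow_app}. Judged on its own, your set-up in Steps 1--2 is reasonable (Theorem \ref{Thm:existence} supplies the flow), but the decisive Step 3 is a hope rather than an argument.

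\textbf{Step 3.} That blow-down limits are expanding gradient solitons is exactly what the Harnack inequality provides under $\CPICt$, and nothing in your sketch replaces it. To use an expander entropy or a forward reduced volume on a non-compact flow with only $\Ric\ge 0$, $|K|\le c_0/t$ and $\mathrm{AVR}>0$, you would need:
\begin{itemize}
\item finiteness and monotonicity, with all boundary terms at infinity controlled;
\item minimisers (or a reduced distance based at $t=0$, where the blow-downs converge only to a cone, not smoothly) that pass to the pointed limit without escaping to spatial infinity;
\item an equality case that survives that limit.
\end{itemize}
None of this is addressed. Self-similarity is also not automatic from the scale invariance of the asymptotic cone, since Ricci flows coming out of cones are not unique in general.

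Even granting an expander, your flatness argument cannot close. The identity $\Delta_f\Scal=-\Scal/t-2|\Ric|^2$ holds on every expander. The only consequence of pinching you feed in, $|\Ric|^2\ge c^2\Scal^2$, holds for every metric with $c^2=1/n$. The same reasoning would therefore ``prove'' that the non-flat expanders with positive curvature operator coming out of cones over round spheres of radius less than one are flat. The pinching has to enter through the asymptotic geometry. For instance, the curvature of the asymptotic cone vanishes in radial directions, which is incompatible with $\Ric\ge\e(n-1)\Scal\,g$ unless the cone is flat.

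Your alternative ``Chen--Zhu route'' does not help either. In this paper, the statement that pinching forces $t\,\Scal_{g(t)}(x_0)\to 0$ is Part (ii) of Theorem \ref{lifted_flow_app}, whose proof invokes Theorem \ref{DSS_AVR_thm} itself. Chen--Zhu's version relies on the Harnack inequality.

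\textbf{Step 4.} This step is also incorrect as written, though it is repairable.
\begin{itemize}
\item Pointed Cheeger--Gromov convergence gives $t\,\Scal_{g(t)}\to 0$ only on balls $B_{g(t)}(x_0,A\sqrt{t})$, not $\sup_M$.
\item The inequality $\partial_t\Scal\ge\Delta\Scal+\frac{2}{n}\Scal^2$ holds along every Ricci flow and uses no pinching. It yields neither an upper bound nor a contradiction on a non-compact manifold.
\item The Li--Yau/integral comparison is never actually made.
\end{itemize}
Passing from a flat blow-down to a flat $g$ is the non-degeneracy issue that Part (i) of Theorem \ref{lifted_flow_app} handles via $\eta$-Ricci balls (Lemmas \ref{Miles_Ricci_balls_lem} and \ref{Ricci_balls_vol_suppression_lem}).

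With $\mathrm{AVR}(g)>0$ there is a quicker fix. The shrinking balls lemma \ref{shrinking_balls_lem} and the monotonicity of volume under $\Ric\ge 0$ give
\[
\VolB_{g(k)}(x_0,\sqrt{k}\,r)\le \VolB_{g}\big(x_0,\sqrt{k}\,(r+\beta\sqrt{c_0})\big).
\]
So if $k^{-1}g(k)$ converges to Euclidean space, then $r^n\le \mathrm{AVR}(g)\,(r+\beta\sqrt{c_0})^n$ for all $r>0$. Hence $\mathrm{AVR}(g)=1$, and Bishop--Gromov rigidity makes $g$ Euclidean.

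The genuinely missing idea is therefore the one in Step 3: a substitute for the Harnack inequality that forces the blow-down of a PIC1-pinched flow with positive asymptotic volume ratio to be flat.
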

The point $x_0\in M$ is arbitrary in the theorem above; it is well known that 
$\mathrm{AVR}(g)$ is well-defined independently of the choice of $x_0$ because 
$\Ric_g\geq 0$ as a consequence of the PIC1 pinching hypothesis (see Section \ref{PIC1_sect}). We use the notation $\VolB_g(x_0,r)$ to refer to the volume of 
the ball $B_g(x_0,r)$ in $(M,g)$, with respect to the volume measure of $g$.

The proof of our main theorem \ref{main_thm} will directly appeal to Theorem \ref{DSS_AVR_thm}. It will also appeal to the key ingredient of the proof of 
Theorem \ref{PIC1_pinching_thm_LT}, and its analogue in the proof of 
Theorem \ref{3D_thm}, which is the following Ricci flow existence result.

Below, we use the terminology $|K|_{g(t)}$ to refer to the function on $(M,g(t))$ that gives, at $x\in M$, the magnitude of the largest sectional curvature at $x$. It is equivalent to $|\calr|_{g(t)}$ but only up to a constant depending on $n$.
\begin{thm}[Lee-Topping \cite{LeeTopping2022} and \cite{LeeTopping2022_PIC1}]
\label{Thm:existence}
For any $n\geq 3$ and $\e\in (0,\frac{1}{n(n-1)})$, there exist $c_0>0$ and
$\e_0\in (0,\frac{1}{n(n-1)})$ such that 
the following holds. Suppose $(M^n,g_0)$ is a complete non-compact manifold 
such that
\begin{equation}
\label{e0_pinching_hyp}
\calr_{g_0}-\e\, \Scal_{g_0} \cdot \ci\in \CPICo
\end{equation}
on $M$. Then there exists a smooth complete Ricci flow solution $g(t)$ on $M$ for 
$t\in [0,\infty)$ such that $g(0)=g_0$ and 
\begin{enumerate}[(a)]
\item
\label{LT_exist_a}
$\calr_{g(t)}-\e_0\, \Scal_{g(t)} \cdot \ci\in \CPICo$; 
\item 
\label{LT_exist_b}
$|K|_{g(t)}\leq c_0 t^{-1}$.
\end{enumerate}
for all $t>0$.
\end{thm}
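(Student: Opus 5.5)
The plan is to construct the flow as a limit of flows starting from compact exhaustions (or local flows), with the pinching preserved and the $c_0/t$ curvature decay coming from the pinching itself. Concretely, I would proceed in three steps.

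\textbf{Step 1: local existence with pinching.} Working on balls $B_{g_0}(x_0,R)$, I would invoke a local Ricci flow existence result with pseudolocality-type estimates. I would conformally modify $g_0$ near the boundary of each ball to obtain complete bounded-curvature metrics $g_{0,R}$, and run Shi's flow from each. These modified metrics need not be PIC1 pinched near the boundary. I would therefore use the local PIC1 pinching estimates of Lee-Topping type: a local maximum principle argument, based on the Brendle/B\"ohm--Wilking pinching family of cones, showing that on the interior region the curvature stays in a slightly degenerated pinched cone $\calr-\e_0\Scal\cdot\ci\in\CPICo$ for a short time. This time is controlled only by local data.

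\textbf{Step 2: the a priori estimate $|K|\le c_0/t$.} This is the heart of the argument. I would argue by contradiction via point-picking: suppose that for flows satisfying (a), $t|K|$ becomes large at some point. Rescaling there and using Perelman-type point selection (with (a) and $\Ric\ge0$), I would extract an ancient or eternal limit flow that is PIC1 pinched and has bounded curvature at the base point. The limit must then be either compact (spherical space form), ruled out by non-compactness and volume considerations, or, via Brendle's Harnack-free pinching rigidity for ancient solutions, flat. Flatness contradicts the normalisation $|K|=1$ at the base point. The pinching is what makes the ancient limit rigid: pinched ancient PIC1 solutions with bounded curvature are shrinking space forms (as in Brendle--Huisken--Sinestrari). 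This step gives $t|K|\le c_0$ uniformly on the time interval of existence.

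\textbf{Step 3: extension to all time and passage to the limit.} With $|K|\le c_0/t$ and the pinching in hand, the local existence time can be iterated: at time $t$, restart the local flow with curvature scale $t^{-1}$, so that it exists for a further time comparable to $t$. This gives existence on $[0,\infty)$ for the approximating flows on growing regions. Hamilton's compactness, using the $c_0/t$ bound together with Shi's estimates and the local control near $t=0$ from $g_0$, then yields a complete limit flow $g(t)$ with $g(0)=g_0$. Completeness follows from the distance distortion estimate $\Ric\ge0$ shrinking and $|\Ric|\le C/t$ giving $d_{g(t)}\ge d_{g_0}-C\sqrt t$. Finally, (a) and (b) pass to the limit.

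The main obstacle is Step 2: producing the blow-up limit without assuming bounded curvature or noncollapsing of $g_0$. Collapsing could prevent a limit flow from existing. I would try first to work on local universal covers, or to use the Ricci pinching to obtain a uniform lower volume bound at curvature scale, and, failing that, to pass to limits in the category of local flows on lifted balls.
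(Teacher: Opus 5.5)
The paper gives no proof of this statement. It is imported from Lee--Topping, so I can only judge your outline on its own merits. The overall architecture is reasonable: local flows on an exhaustion, restarts at the curvature scale, a priori pinching and $c_0/t$ estimates, then a limit.

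\textbf{The gap is in Step 2.} This step carries the real content of the theorem, and you flag the problem there without closing it. Hamilton's compactness theorem needs a uniform injectivity radius (or unit-ball volume) lower bound at the rescaled base points, and nothing supplies one.
\begin{itemize}
\item There is no volume hypothesis on $g_0$.
\item Pinching gives no non-collapsing at the curvature scale. Round lens spaces $S^3/\mathbb{Z}_p$ with $p\to\infty$ are PIC1 pinched, non-flat, and collapsed at their curvature scale.
\item Compact pinched space forms violate (b) as they approach extinction. So any proof of (b) must use non-compactness in a form that survives collapse. You only use non-compactness after a complete limit has already been produced.
\end{itemize}

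None of your fallbacks repairs this.
\begin{itemize}
\item Universal covers, of $M$ or of large balls, do not undo collapse, which is local. Berger spheres are simply connected, have bounded curvature, and still collapse.
\item Lifting by the exponential map, as in Lemma~\ref{lifted_flow_lemma}, gives non-collapsed flows only on Euclidean balls of radius about $\pi/\sqrt{\sup|K|}$. The limit is then an incomplete ancient flow on a ball. A piece of a shrinking round sphere is exactly such a flow, pinched and non-flat, so no contradiction can come from local information alone.
\item In this paper the lifting device works in Theorem~\ref{lifted_flow_app} only because $\Ric\ge 0$ and $|K|\le c_0/t$ are already known from the initial time. Non-collapsing is transported back from a time of bounded curvature by the eternally shrinking balls lemma. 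A complete non-collapsed limit is then obtained by blowing up the local limit at its \emph{initial} time.
\item In your blow-up, the original time $0$ is sent to $-\infty$, and the scale-invariant bound is precisely what you are trying to prove. So that mechanism is not available to you.
\end{itemize}

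Separately, Brendle--Huisken--Sinestrari classify \emph{compact} pinched ancient solutions. There is no citable `Harnack-free rigidity' for complete non-compact ones. Even with a complete limit, you would still owe a proof of the flat-or-compact dichotomy in that setting.

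\textbf{Steps 1 and 3 also need restructuring.}
\begin{itemize}
\item The time for which Step 1 gives pinching depends on $\sup|K|_{g_0}$ over the ball. This is unbounded along the exhaustion, so the number of restarts needed to reach any fixed time is unbounded.
\item If each restart treats the current $\e_0$-pinched metric as fresh initial data, the pinching constant, and with it $c_0$, degrades at every step.
\item What is needed is a pair of \emph{local} a priori estimates for the concatenated flow, measured from the original time $0$:
\begin{itemize}
\item persistence of $\e_0$-pinching given $|K|\le c_0/t$, for example via a local maximum principle for curvature cones;
\item $|K|\le c_0/t$ given the pinching.
\end{itemize}
These must be closed by a bootstrap whose time of validity depends only on $n$, $\e$ and the radius of the region where the flow lives. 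It must never depend on the curvature or volume of $g_0$.
\item During the construction, (b) can only hold locally, since the metric is not pinched near the conformally modified boundary. So your claim that Step 2 yields $t|K|\le c_0$ on the whole existence interval is too strong.
\end{itemize}
Once such local estimates are in hand, your completeness argument and the passage to the limit in Step 3 are fine.
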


\begin{rmk}
The cone $\CPICo$ is larger than the cone of algebraic curvature operators with  
2-nonnegative curvature operator (see \cite{PIC1_survey}) so our main theorem \ref{main_thm} also resolves the 2-nonnegative pinching conjecture in \cite[Question 1.6]{DeruelleSchulzeSimon2025} as a by-product.
\end{rmk}


An important feature of Theorem \ref{Thm:existence} is that no initial assumptions on volume are  necessary, and initial boundedness of curvature is not assumed.  Hence,  the theorem  may be used to obtain an immortal  solution satisfying \eqref{LT_exist_a} and 
\eqref{LT_exist_b}   starting at the metric given in the statement of Theorem \ref{main_thm}.

As mentioned above, the reason for the non-negative complex sectional assumption in Theorem \ref{PIC1_pinching_thm_LT} is that it gives access to the Harnack inequality,
which can be used to show that certain parabolic blow-downs are expanding solitons \cite[Conjecture 16.6]{formations}, \cite[Theorem 4.3]{ChenZhu2000}, 
\cite[Theorem 1.2]{SchulzeSimon2013}.
One  consequence of the Harnack inequality is that for each point $x_0\in M$, the function $t\mapsto t\,\Scal_{g(t)}(x_0)$ is increasing.\footnote{In this paper \emph{increasing} refers to weakly increasing rather than strictly increasing.}
In particular, knowing that we have a single point $x_0\in M$ at which $\Scal_{g(0)}(x_0)>0$ (and hence have positivity of the scalar curvature throughout for positive time, by the maximum principle) implies that 
$t\,\Scal_{g(t)}(x_0)\geq \Scal_{g(1)}(x_0)>0$ for all $t\geq 1$. This then gives us  the   asymptotic control 
$$\liminf_{t\to\infty} \ t\,\Scal_{g(t)}(x_0)>0,$$
which guarantees that  parabolic blow-downs  are non-trivial. 
The following theorem gives us this same control for immortal solutions 
with $c_0/t$ curvature decay, assuming  the   weaker  condition of non-negative Ricci curvature in place of non-negative complex sectional curvature, and is a crucial ingredient in the proof of our main theorem \ref{main_thm}.

\begin{thm}
\label{lifted_flow_app_intro}
Suppose $n\geq 3$ and $(M^n,g(t))$ is a complete  
Ricci flow for $t\in [0,\infty)$, and  $x_0\in M$. Suppose that there exists $c_0>0$  so that 
\begin{enumerate}[(A)]
\item 
\label{harnack_type_part_a}
$\Ric_{g(t)}\geq 0$  for all $t\in [0,\infty)$; 
\item 
\label{harnack_type_part_b}
$|K|_{g(t)}\leq c_0 t^{-1}$ for all $t\in (0,\infty)$;
\item 
$\Ric_{g(0)}(x_0)>0$.
\end{enumerate}
Then
\beql{Ric_pos_conc_intro}
\liminf_{t\to\infty} \ t\,\Scal_{g(t)}(x_0)>0.
\eeq
\end{thm}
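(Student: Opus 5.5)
We argue by contradiction: suppose there are times $t_i\to\infty$ with $t_i\,\Scal_{g(t_i)}(x_0)\to 0$. The strategy is to pass to the parabolically rescaled flows $g_i(t):=t_i^{-1}g(t_it)$, $t\in[0,\infty)$. These are complete, have $\Ric\geq 0$, and satisfy $|K|_{g_i(t)}\leq c_0t^{-1}$ by \partref{harnack_type_part_b}. Since the manifold may be collapsed at infinity, we cannot expect a Cheeger--Gromov limit directly.

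\textbf{Lifting to balls of uniform size.} Following the lifting technique advertised in the abstract, we pull back $g_i$ via the exponential map of $g_i(1)$ at $x_0$ to a ball of radius $r_0=r_0(n,c_0)$ in the tangent space. On that ball the curvature bound at time $1$ gives injectivity radius control. Shi-type estimates on $[1/2,2]$, together with the $c_0/t$ bound, then give uniform control of all derivatives. Distance distortion estimates for $c_0/t$ flows (the $\sqrt{t}$ shrinking lemma) let us extend the lifted flows backwards in time over $(0,2]$ on slightly smaller balls. We then extract a smooth local limit flow $\tilde g_\infty(t)$, $t\in(0,2]$, on a ball in $\R^n$. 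This limit has $\Ric\geq 0$ and $\Scal_{\tilde g_\infty(1)}(0)=0$, so by the strong maximum principle for the scalar curvature evolution $\partial_t\Scal=\Delta\Scal+2|\Ric|^2$ it satisfies $\Scal\equiv 0$. Combined with $\Ric\geq0$, this makes the limit Ricci flat, hence static, on a parabolic neighbourhood of $(0,1)$ for $t\in(0,1]$.

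\textbf{Contradicting $\Ric_{g(0)}(x_0)>0$.} We must transfer this near-flatness at late times back to time $0$. The key quantity is a monotone, scale-invariant integral of positive curvature near $x_0$. I would try the following. Since $\Ric\geq 0$, volumes are nonincreasing along the flow, with $\frac{d}{dt}\Vol=-\int\Scal$. The initial assumption $\Ric_{g(0)}(x_0)>0$ yields a definite volume drop, or rather a definite relative drop: a positive lower bound on $\int_0^{1}\int_{B}\Scal\,dV\,dt$ over a fixed ball. Pulling back to the lifted picture, we want a quantity such as
\[
\frac{\VolB_{g(t)}(x_0,\sqrt t)}{\VolB_{g(0)}(x_0,\sqrt t)}
\]
(or its lifted analogue) to decay by a definite factor once positive curvature has appeared. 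A Ricci-flat limit would force this ratio to stabilise, which is incompatible with the definite decay. Alternatively, one could use a reduced-volume type monotone quantity on the lift and show that near-equality in the limit forces $\Ric_{g(0)}(x_0)=0$ via the rigidity case.

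\textbf{Main obstacle.} The hardest step is precisely this backward transfer. With only $\Ric\geq 0$ we have no Harnack inequality, and the limit sees only times of order $t_i$, so we need a scale-invariant monotone quantity linking time $0$ to time $t_i$. My first attempt would be to show that the lifted limit flow is defined on the whole lifted tangent ball for $t\in(0,2]$ and is a static flat metric. Its flatness for all small $t$ would then propagate, by the $c_0/t$ distance estimates, to the statement that the universal-cover-like lifts of $(M,g(s))$ near $x_0$ become increasingly Euclidean on scales $\sqrt s$ for every fixed $s$. That would contradict positivity of $\Ric_{g(s)}(x_0)$, which persists for $s>0$ by the strong maximum principle.
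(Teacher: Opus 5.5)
Your first half matches the paper: you blow down by $t_i$, pull the flows back by a fixed exponential map taken at a positive time, extract a local limit for positive times, and use the strong maximum principle for $\partial_t\Scal=\Delta\Scal+2|\Ric|^2$ to make the limit Ricci-flat and static. But the step you flag as the main obstacle is the heart of the proof, and you leave it open. None of your candidates works as stated.

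\emph{Your ``first attempt''.} Your convergence holds only on compact subsets of $(0,2]$ in rescaled time. A fixed original time $s$ corresponds to rescaled time $s/t_i\to0$. Scale-invariantly, the $i$-th rescaled lift at time $s/t_i$ and scale $\sqrt{s/t_i}$ is just the lift of $(M,g(s))$ at scale $\sqrt s$, a fixed non-flat object. Flatness of the limit at small positive times therefore says nothing about it, and the $c_0/t$ distance estimates control distances, not flatness. So this attempt has no mechanism.

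\emph{Your volume-drop idea.} A definite lower bound for $\int_0^1\!\int_B\Scal$ on a unit ball becomes $O(t_i^{-n/2})$ after rescaling and is invisible in the limit. Your ratio $\VolB_{g(t)}(x_0,\sqrt t)/\VolB_{g(0)}(x_0,\sqrt t)$ involves time-$0$ volumes of a possibly collapsed manifold, which a limit existing only for $t>0$ cannot see. You give no reason why a unit-size curvature bump at time $0$ should have a definite scale-invariant effect at scale $\sqrt{t_i}$, nor why a Ricci-flat limit would force the ratio to stabilise. The reduced-volume suggestion is not developed into any rigidity statement that reaches $\Ric_{g(0)}(x_0)$.

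What is missing is a scale-invariant quantity controlled uniformly in $i$, at every scale between $t_i^{-1/2}$ and $1$ and at all times down to $t=0$. The paper uses the Euclidean volume-ratio deficit on the lift.
\begin{itemize}
\item Since the pull-back is by a fixed local diffeomorphism, the lifted flows exist for all $t\geq0$. Because $\Ric\geq0$ makes lengths decrease, lifted unit balls stay compactly inside the domain for $t\in[0,c_0]$. So you must retain time $0$ rather than work only on $(0,2]$.
\item After a fixed rescaling, $\Ric_{g(0)}(x_0)>0$ gives $\Ric_{g(0)}\geq\eta$ on $B_{g(0)}(x_0,1)$. In the $i$-th rescaled flow this becomes $\Ric\geq\eta t_i$ on a ball of radius $t_i^{-1/2}$, and it lifts by the local isometry to the time-$0$ lifted metric.
\item Bishop--Gromov with this lower bound gives $\VolB\leq(1-2\de)\om_n r^n$ at $r=t_i^{-1/2}$, with $\de=\de(n,\eta)$ (Lemma~\ref{Miles_Ricci_balls_lem}). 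Bishop--Gromov with $\Ric\geq0$ then propagates this deficit \emph{upwards} to every $r\in[t_i^{-1/2},1)$. This is how a unit-scale bump is felt at all larger scales.
\item Volumes of fixed sets decrease, and balls shrink at most by $\be\sqrt{c_0t}$. Hence the deficit $1-\de$ survives at scale $r$ until time $\rho r^2$, with $\rho=\rho(n,\eta,c_0)$ (Lemma~\ref{Ricci_balls_vol_suppression_lem}).
\item These bounds at positive times pass to the limit, giving $\VolB_{h_\infty(t)}(0,r)\leq(1-\de)\om_n r^n$ for all $r\in(0,1)$ and $t\in(0,\rho r^2]$. Since the limit is static and smooth, this contradicts $\VolB(0,r)/(\om_n r^n)\to1$ as $r\downto0$.
\end{itemize}
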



\subsection{The gap phenomenon for manifolds with weakly PIC1}



The gap phenomenon is another situation in which a complete non-compact manifold with some form of non-negative curvature can be seen to be flat by virtue of an additional curvature hypothesis.
Instead of a pinching hypothesis, gap theorems assume sufficiently fast curvature decay at spatial infinity in a pointwise or averaged sense; see, for example,
\cite{MokSiuYau,GreeneWu,EschenburgSchroederStrake,Dress}. In the K\"ahler case, Ni \cite{Ni} showed that a complete non-compact K\"ahler manifold $(M,g)$ with non-negative bisectional curvature that satisfies the curvature decay
\begin{equation}
   \liminf_{r\to+\infty} \frac{r^2}{\VolB_{g}(x_0,r)} \int_{B_{g}(x_0,r)} \mathrm{Scal_g}\,d\mathrm{vol}_{g}=0
\end{equation}
for some $x_0\in M$, must be flat.
While the optimal Riemannian analogue remains unclear, Chan-Lee \cite{ChanLee2025} showed that in the case of Euclidean volume growth, an analogue of Ni's gap theorem holds under the 
weakly PIC1 curvature condition. In the non-Euclidean volume growth case, it  was further shown that   there exists a dimensional constant $\e_n>0$ such that if a complete non-compact manifold $(M,g)$ with non-negative complex sectional curvature  satisfies 
\begin{equation}
          \sup_{x\in M} \int^\infty_0 \left(\frac{r}{\VolB_{g}(x,r)} \int_{B_{g}(x,r)} \mathrm{Scal_{g}}\,d\mathrm{vol}_{g}\right)\,dr <\e_n,
      \end{equation}
then it is necessarily flat.  The argument in \cite{ChanLee2025}  used to show this fact relied   on Brendle's Harnack inequality \cite{Brendle2009} for manifolds with  non-negative complex sectional curvature. 
In order to illustrate the applicability of the new methods in this paper, we will use 
Theorem~\ref{lifted_flow_app_intro} in place of Brendle's Harnack inequality 
to improve this result to the setting that the manifold is weakly PIC1, which is weaker than the setting of non-negative complex sectional  curvature (see e.g.~\cite{PIC1_survey}).
\begin{thm}\label{thm:gap}
For all $n\geq 4$, 
there exists $\e_n>0$ such that the following holds: Suppose $(M^n,g_0)$ is a complete non-compact manifold with
\begin{itemize}
      \item[(1)] $\mathcal{R}_{g_0}  \in  \CPICo$;
      \item[(2)] 
      the sectional curvature satisfies $K_{g_0}\geq -1$;
      \item[(3)] for all $x\in M$, 
      \begin{equation}
          \int^\infty_0 \left(\frac{r}{\VolB_{g_0}(x,r)} \int_{B_{g_0}(x,r)} \mathrm{Scal_{g_0}}\,d\mathrm{vol}_{g_0}\right)\,dr <\e_n.
      \end{equation}
\end{itemize}
Then $(M,g_0)$ is flat.
\end{thm}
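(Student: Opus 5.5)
Our plan is to argue by contradiction, using Theorem \ref{lifted_flow_app_intro} in place of Brendle's Harnack inequality in the strategy of Chan-Lee \cite{ChanLee2025}. Suppose $(M,g_0)$ is not flat. The first step is to run a Ricci flow from $g_0$. We would use the existence theory for weakly PIC1 initial data with $K_{g_0}\geq -1$ and small averaged scalar curvature. This is the approach of \cite{ChanLee2025}, building on the Lee-Topping PIC1 existence theory behind Theorem \ref{Thm:existence}. It should produce a complete immortal flow $g(t)$, $t\in[0,\infty)$, with $\calr_{g(t)}\in\CPICo$ for $t>0$. It should also give a curvature decay estimate
$$|K|_{g(t)}\leq \frac{C(n)\,\e_n}{t}\quad\text{or at least}\quad |K|_{g(t)}\leq c_0 t^{-1},$$
where smallness of the integral in (3) is what gives the scale-invariant curvature control for all times. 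The lower bound $K\geq -1$ only gives short-time existence directly. The integral hypothesis (3), being uniform in $x$ and scale-invariant, is what lets us extend to all times. To get this we would use the standard heat-kernel/mean-value bound $\Scal_{g(t)}(x)\lesssim \int_0^\infty \frac{r}{\VolB(x,r)}\int_{B(x,r)}\Scal\,dr$ applied along the flow.

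Since PIC1 implies $\Ric\geq0$, condition \eqref{harnack_type_part_a} holds for $t>0$. Hypothesis (1) at $t=0$ gives $\Ric_{g_0}\geq 0$, so (A) in fact holds on $[0,\infty)$. Non-flatness, together with the strong maximum principle for Ricci curvature under PIC1 (Brendle-Schoen), gives positive Ricci curvature at some point after a short time. Since we only have weak conditions at time $0$, we would shift time: replace $g(t)$ by $g(t+t_0)$ for small $t_0>0$. The curvature bound \eqref{harnack_type_part_b} is preserved, with $c_0$ unchanged, because $t+t_0\geq t$. If the strong maximum principle only yields a splitting of the universal cover with a flat factor, we would pass to that cover or quotient and run the argument on the non-flat factor. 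With these arrangements, Theorem \ref{lifted_flow_app_intro} yields
$$\liminf_{t\to\infty} t\,\Scal_{g(t)}(x_0)=:2\delta>0.$$

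The contradiction comes from showing that $t\,\Scal_{g(t)}(x_0)\to0$, or at least that it is $\leq C\e_n$ for large $t$ with $C$ independent of the choice of non-flat $g_0$. This is the step Chan-Lee carry out. We would control $\Scal_{g(t)}(x_0)$ by a heat-type integral of $\Scal_{g_0}$: under $\Ric\geq0$, $\Scal$ is essentially a subsolution of the heat equation, modulo the reaction term, which is controlled by $c_0/t$. Combined with Li-Yau heat kernel bounds, this gives
$$t\,\Scal_{g(t)}(x_0)\leq C\,\frac{t}{\VolB_{g_0}(x_0,\sqrt t)}\int_{B_{g_0}(x_0,\sqrt t)}\Scal_{g_0}+\text{tail},$$
and the right side is bounded by the integral in (3) restricted to $r\gtrsim\sqrt t$, which tends to $0$.

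The main obstacle is exactly this decay bound. In \cite{ChanLee2025} the Harnack inequality was also used for it, through monotonicity of $t\Scal$. Here, if needed, we would argue more softly. We would blow down along $t\to\infty$ using the $c_0/t$ bound, obtaining a limit flow (possibly collapsed, handled via local lifts) whose scalar curvature at time $1$ is at least $2\delta$ by Theorem \ref{lifted_flow_app_intro}. The scale invariance of (3) shows that the rescaled initial data have averaged curvature tending to $0$, which forces the limit to be flat at time $1$. That contradiction completes the argument. The restriction $n\geq4$ enters only through the existence theory; for $n=3$ the result is covered by the pinching results above.
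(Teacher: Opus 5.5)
Your skeleton matches the paper's proof. It has three pieces: the Chan--Lee flow \cite{ChanLee2025} with $\calr_{g(t)}\in\CPICo$ and $|K|_{g(t)}\leq c_0t^{-1}$; the Chan--Lee heat-kernel bound giving $\liminf_{t\to\infty}t\,\Scal_{g(t)}(x_0)=0$; and Theorem \ref{lifted_flow_app_intro} in place of Brendle's Harnack inequality. The genuine gap is the step you pass over quickly, namely producing a point where $\Ric>0$. No strong maximum principle gives this for PIC1 flows in general. For example, $\Sigma^2\times\R^{n-2}$, with $\Sigma$ a non-flat surface of non-negative curvature, has non-negative curvature operator and so is weakly PIC1. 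Yet it has a null Ricci direction everywhere, for all time. Even the dichotomy you implicitly use (either $\Ric>0$ somewhere, or the universal cover splits off a flat factor) is exactly what has to be proved, and the paper does this as Theorem \ref{thm:new-gap-dyn}:
\begin{itemize}
\item Pass to the universal cover, take the de Rham decomposition \cite{deRham1952} at some $s_0>0$, split the flow by uniqueness, and pick a factor with positive scalar curvature.
\item Exclude a $2$-dimensional factor using Hamilton's Harnack inequality \cite{Ham_RF_Harnack}, since Theorem \ref{lifted_flow_app_intro} needs $n\geq 3$.
\item Exclude compact and symmetric or Einstein factors by finite-time blow-up, and Ricci-flat holonomies because PIC1 then forces flatness.
\item Berger's classification \cite{Berger} now leaves holonomy $\mathrm{SO}(k)$ or $\mathrm{U}(k/2)$.
\item Only at this point does one get $\Ric>0$, in fact everywhere. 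If $\Ric(e_1,e_1)=0$, weak PIC1 forces $\calr(e_1,e_i,e_1,e_i)+\calr(e_1,e_j,e_1,e_j)=0$ for distinct $i,j>1$. Brendle--Schoen's Proposition 8 \cite{BS-2}, applied to $(M,g(t))\times S^1$, makes this zero set holonomy invariant. That propagates it to every $e_k$ and forces $\Scal=0$, a contradiction.
\end{itemize}
Naming ``the strong maximum principle for Ricci curvature under PIC1'' does not supply this chain.

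You have also misplaced the difficulty. The decay step uses no Harnack inequality; the paper takes the derivation of Chan--Lee's estimate (5.14) verbatim. That estimate rests on two ingredients:
\begin{itemize}
\item $(\partial_t-\lap_{g(t)}-\Scal_{g(t)})\Scal_{g(t)}\leq 0$, which is Chan--Lee's Lemma 3.2 and is where $n\geq 4$ and PIC1 are used, so $n\geq4$ does not enter only through existence;
\item a Li--Yau type bound for the heat kernel of $\partial_t-\lap-\Scal$.
\end{itemize}
Your bound of the reaction term by $c_0/t$ only gives $(\partial_t-\lap)\Scal\leq Cc_0t^{-1}\Scal$. The resulting factor $(t/s)^{Cc_0}$ blows up as $s\to 0$, so you cannot compare with $\Scal_{g_0}$ at all. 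Your alternative bound ``$\leq C\e_n$'' would not suffice either, because the lower bound from Theorem \ref{lifted_flow_app_intro} is not quantitative; you need $\liminf=0$ exactly. The blow-down fallback also fails as stated. The integral in (3) is scale invariant, so under blow-down only its tail over fixed positive scales tends to zero. Turning that into flatness at time $1$ of a lifted, possibly collapsed limit would need exactly the heat-kernel estimate you were trying to bypass.
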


\subsection{Outline of the proof of Theorem \ref{main_thm}}

To understand the innovations in this paper, we need to understand the existing overarching strategy to prove pinching theorems with Ricci flow that originates in the work of 
Chen-Zhu \cite{ChenZhu2000}. Suppose for a contradiction that we have a non-compact non-flat pinched manifold $(M,g)$. First, one hopes to be able to run the Ricci flow for all time starting with $(M,g)$, so that the flow has estimates including $|K|\leq C/t$ curvature decay. Traditionally that has been impossible without imposing additional curvature hypotheses, but in this paper the flow will be provided by Theorem \ref{Thm:existence}.
Next, one does a parabolic blow-down of the Ricci flow and hopes to extract a nonflat limit that one can think of as a parabolic tangent cone at infinity. 
Morally this limit should be a non-trivial expanding Ricci soliton that is also pinched,
and one works towards a contradiction guided by the idea that no such flow should exist.

In this paper, this rough idea would be realised precisely by Theorem \ref{DSS_AVR_thm}, but for the hypothesis of positive asymptotic volume ratio. Without that condition $\mathrm{AVR}>0$, the parabolic blow-down fails since the whole flow would collapse. Up until this paper the only situation in which this issue could be addressed without additional hypotheses was in three dimensions, using the work of Lott \cite{Lott2019}.
In this paper we find a method for handling the possibility of zero 
asymptotic volume ratio that works in all dimensions; as a by-product this
gives a completely different approach to that of Lott in three dimensions.

The new method, in its simplest form, is heuristically as follows. First we advance through the Ricci flow until a fixed later time $t=t_0$ so that the curvature is bounded. Next we use the exponential map at that time to take a local cover of the flow at a point $x_0\in M$, always working within the conjugate radius so that the exponential map is a local diffeomorphism. This map, which is fixed independent of time, is then used to pull back the earlier part of the Ricci flow. 
The resulting Ricci flow has traded completeness for non-collapsing. 

This whole approach is applied not just to the original Ricci flow $(M,g(t))$, but also to its parabolic blow-downs. The result is a sequence of local non-collapsed Ricci flows
that subconverges to a local non-collapsed Ricci flow for positive time that attains rough initial data in a weak sense that  has a nontrivial tangent cone at $x_0$.
We can then blow up this blown-down Ricci flow to get a \emph{complete} Ricci flow for all $t>0$ that attains a \emph{non-collapsed} cone as initial data. All the curvature estimates, including pinching, pass to this blow-up  $(\m,H_\infty(t))$. Crucially, this flow now has positive asymptotic volume ratio, but is non-flat. It is \emph{this} Ricci flow that can be used to get a contradiction via Theorem \ref{DSS_AVR_thm}.

In practice, we don't quite phrase the proof like this.
As a first deviation from these heuristics, in order to make the method more applicable for other problems, we split the argument into two: The non-flatness of the initial data $g$ will be shown to imply that the scalar curvature of $g(t)$ 
cannot decay faster than a rate $c/t$, while the pinching condition will be shown to imply that the scalar curvature of $g(t)$ 
\emph{has} to decay 
faster than a rate $c/t$, 
giving a contradiction. 
The 
first part here is essentially the content of Theorem \ref{lifted_flow_app_intro}. Both parts will be covered in Theorem \ref{lifted_flow_app}.

As a second deviation from these heuristics, we  opt for an equivalent method that avoids considering the blown-down initial data (only the blown-down flow) and so avoids all local metric geometry and the associated technical complications. Instead, we pass basic information about the volumes of balls in the approximating flows from $t=0$ to a positive time, and then reason that these volume bounds pass to the limit flow because of the smooth local convergence of flows for $t>0$. 

The precise proof is assembled in Section \ref{main_thm_pf}.
Before that we give a definition and properties of PIC1 in Section \ref{PIC1_sect} and
discuss ball inclusion lemmas in Section \ref{ball_inclusion_sect}, including a new \emph{eternally shrinking balls lemma \ref{eternally_shrinking_balls_lem}}.
We show how positive Ricci curvature in the initial data of a Ricci flow can lead to upper bounds on the volume of balls for a definite time in Section \ref{Ricci_ball_sect},
and construct lifted Ricci flows with uniform lower bounds on the volume of balls
in Section \ref{lifted_flow_sect}.
The proof of Theorem \ref{lifted_flow_app} is given in Section \ref{harnack_sub_sect}.

\bigskip

\noindent
\emph{Acknowledgements:} 
For the purpose of open access, the authors have applied a Creative Commons Attribution (CC BY) licence to any author accepted manuscript version arising. A.~Deruelle is partially supported by grants from the French National Research Agency
ANR-24-CE40-0702 (Project OrbiScaR) and the Charles Defforey Fondation-Institut de France via
the project “KRIS”. He also benefits from a Junior Chair from the Institut Universitaire de France. M.-C.~Lee is supported by Hong Kong RGC grants No.~14300623 and No.~14304225, and an Asian Young Scientist Fellowship. F.~Schulze has received funding from the European Research Council (ERC) under the European Union’s Horizon 1.1 research and innovation programme, grant agreement No.~101200301
(GENREG). M. Simon  was supported by the Special Priority Program SPP 2026    “Geometry at Infinity” of the German Research Foundation (DFG).

\section{The PIC1 curvature condition}
\label{PIC1_sect}

In this section we survey the PIC1 notion of positive curvature, summarising the exposition in \cite{PIC1_survey}. PIC1 is a condition that is the same as positive Ricci curvature in three dimensions, but is a bit stronger in higher dimensions. 
Its relationship with other familiar  notions of positive curvature is summarised in 
\cite[Figure 1]{PIC1_survey}.
It is possible to understand the proof of the main theorem \ref{main_thm} largely using this intuition alone provided one is willing to accept
the existence theory of Theorem \ref{Thm:existence} and the positive asymptotic volume ratio pinching result in Theorem \ref{DSS_AVR_thm}.


One normally discusses notions of non-negativity of curvature in terms of 
\emph{curvature cones}.
Working in $\R^n$, we define the vector space of algebraic curvature tensors
$\mathcal{C}_B(\R^n)$ to be the symmetric bilinear forms on 
$\Lambda^2\R^n$ that satisfy the Bianchi identity. 
The element $\ci\in \mathcal{C}_B(\R^n)$ is the natural extension of the standard inner product on $\R^n$ to $\Lambda^2\R^n$, normalised so that $\ci(e_1\wedge e_2,e_1\wedge e_2)=1$ for all orthogonal unit vectors $e_1,e_2\in \R^n$.


A curvature cone is a closed, convex, $O(n)$-invariant cone within $\mathcal{C}_B(\R^n)$.
Using the $O(n)$-invariance, we can make an isometric identification of any tangent space of a $n$-dimensional Riemannian manifold with $\R^n$ and view the curvature tensor at that point as an algebraic curvature tensor. 
For example, $\ci$ corresponds to the curvature tensor of the unit $n$-sphere.
The manifold is then said to satisfy the  curvature condition corresponding to the cone 
if the 
curvature tensor at each point lies in this cone (or its interior, depending on whether we ask for positive or non-negative curvature). 
For example, the cone of algebraic curvature operators having non-negative inner product with $\ci$ would correspond to non-negative scalar curvature.
Indeed, we can write the scalar curvature as $\langle \calr,\ci\rangle$ up to a 
normalisation factor. 


Although we  only discuss (real) Riemannian manifolds in this paper, many of the most natural curvature cones are defined using the algebra of complexification \cite{MM}.
We can extend any $\calr\in \mathcal{C}_B(\R^n)$ by complex linearity to a symmetric bilinear form on $\Lambda^2\C^n$.
In \cite{PIC1_survey} the following new, self-contained  definition of the PIC1 curvature cone $\CPICo$ was given:
$$\CPICo:=\{\calr\in \mathcal{C}_B(\R^n)\ :\ \calr(\om,\overline{\om})\geq 0
\text{ for all simple }\om\in \Lambda^2\C^n \text{ with }\ci(\om,\om)=0\}.$$
The original definition from \cite{MM} was in terms of the notion of positive isotropic curvature, which we do not need in this paper.
In the sequel we will use implicitly that $\ci\in\CPICo$.

Our ability to prove theorems about PIC1 or PIC1 pinched manifolds was transformed by the eventual realisation that the PIC1 condition was preserved under Ricci flow (e.g. on closed manifolds) \cite{hamposisocurv, N10, BS, wilking2013}, see also \cite[Section 3]{PIC1_survey}. In this paper that property is hidden in the existence theory of Theorem \ref{Thm:existence}.

\begin{rmk}
\label{pinch_trace}
As mentioned above, if $\calr\in \CPICo$, then the corresponding Ricci and hence scalar curvatures are non-negative, i.e. $\langle \calr,\ci\rangle \geq 0$.
In particular, if the pinching  condition $\calr_{g}-\e\, \Scal_{g} \cdot \ci\in \CPICo$ holds then $ \langle \calr_{g}-\e\, \Scal_{g} \cdot \ci , \ci\rangle\geq 0$.
Keeping in mind that the scalar curvature of $\ci$ is $n(n-1)$, we find that 
$$(1 - \ep n(n-1))\Scal_g\geq 0.$$
In particular, if $\ep<\frac{1}{n(n-1)}$ then $\Scal_g\geq 0$, 
which in turn makes the pinching condition imply that $\calr_g\in \CPICo$,
and thus $\Ric_g\geq 0$ (see, e.g. \cite{PIC1_survey}).
\end{rmk}

\section{Shrinking balls lemmas}
\label{ball_inclusion_sect}

In \cite{ST1}, Hamilton's observations about how fast distances can decrease under Ricci flow were adapted to the setting of local Ricci flow in the following lemma.

\begin{lem}[{The shrinking balls lemma, \cite[Corollary 3.3]{ST1}}]
\label{shrinking_balls_lem}
Suppose $(M,g(t))$ is a Ricci flow for $t\in [0,T]$ on a manifold $M$ of any dimension $n$.
Then there exists $\be=\be(n)\geq 1$ so that the following is true.
Suppose that $x_0\in M$ and  $B_{g(0)}(x_0,r)\Subset M$ 
for some $r>0$,  and, for some $c_0>0$, that $\Ric_{g(t)}\leq (n-1)c_0/t$,
on $B_{g(0)}(x_0,r)$ for all $t\in (0,T]$.
Then 
\beq
B_{g(0)}(x_0,r)\supset B_{g(t)}\left(\textstyle{x_0,r-\be\sqrt{c_0 t}}\right)
\eeq
for all $t\in [0,T]$.
\end{lem}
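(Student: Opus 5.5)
The plan is to follow Hamilton's distance distortion estimate under Ricci flow: use the curvature upper bound to control how fast $g(t)$-distances from $x_0$ can shrink, and then argue by contradiction on a first exit time. The key tool is Hamilton's lemma, in its local form. Suppose that at some time $t$ two points $x,y$ are joined by a $g(t)$-minimising geodesic $\gamma$. Suppose also that $\Ric_{g(t)}\leq (n-1)K$ on the $g(t)$-balls of radius $r_0$ about $x$ and $y$. Then
\beq
\frac{d}{dt} d_{g(t)}(x,y)\geq -2(n-1)\left(\tfrac{2}{3}K r_0+r_0^{-1}\right)
\eeq
in the barrier/forward-difference sense. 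The proof is the second variation argument: bound $-\int_\gamma \Ric(\dot\gamma,\dot\gamma)$ from below using test variations that are linear cut-offs near the endpoints. Only the curvature along $\gamma$ near its ends is used, so the estimate is genuinely local. This locality is what lets us work on $B_{g(0)}(x_0,r)\Subset M$ without completeness. With $K=c_0/t$ and the choice $r_0=\sqrt{t/c_0}$, the bound becomes
\beq
\frac{d}{dt} d_{g(t)}(x_0,x)\geq -C(n)\sqrt{c_0/t}.
\eeq
Integrating from $0$ gives a decrease of at most $2C(n)\sqrt{c_0 t}$. We then set $\be(n)=2C(n)$, enlarged to be at least $1$ if necessary.

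To make this rigorous locally, fix $t_1\in(0,T]$ and $y\in B_{g(t_1)}(x_0,r-\be\sqrt{c_0t_1})$, and suppose for contradiction that $y\notin B_{g(0)}(x_0,r)$. I would phrase everything via the ball inclusion directly, to avoid worrying about minimising geodesics leaving the region where the curvature hypothesis holds. For $s\in(0,r)$ and $t\in[0,T]$, consider the statement
\beq
B_{g(t)}\left(x_0,s-\be\sqrt{c_0t}\right)\subset B_{g(0)}(x_0,s).
\eeq
One could equivalently run a continuity argument in $s$ and $t$. Let $t^*$ be the supremum of times up to which the inclusion holds for all $s\le r$. For $t<t^*$, any point in $B_{g(t)}(x_0,s-\be\sqrt{c_0 t})$ is joined to $x_0$ by a $g(t)$-short curve. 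Such a curve stays inside $B_{g(t)}(x_0,\cdot)\subset B_{g(0)}(x_0,r)$, where the curvature bound holds. There is one technicality: the local distance function should be taken as an infimum over curves in a compact subregion, so that minimisers exist. This is fine because $B_{g(0)}(x_0,r)$ is relatively compact.

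Near $t=0$ the bound $c_0/t$ blows up, but the integrated loss $\sqrt{c_0 t}$ tends to zero. Combined with the smoothness of the flow at $t=0$, which gives local uniform equivalence of the metrics on compact sets, this shows the inclusion holds for small $t$. The differential inequality above then shows that the inclusion is an open condition in $t$. It is also closed, by continuity of distances. So $t^*=T$, and taking $s=r$ contradicts the choice of $y$.

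The main obstacle I expect is the localisation of Hamilton's estimate. One needs the balls of radius $r_0=\sqrt{t/c_0}$ around the endpoints, and the minimising geodesic itself, to lie in the region where $\Ric\le (n-1)c_0/t$. This requires care near the boundary. My first attempt would be to handle it by shrinking $r_0$ to $\min\{\sqrt{t/c_0},\,\text{distance to the edge}\}$, noting that the geodesic is only ever used in the interior, and absorbing the boundary cases into the conclusion. At points near the edge the statement is vacuous, since $r-\be\sqrt{c_0t}$ already gives slack of order $\sqrt{c_0t}$.
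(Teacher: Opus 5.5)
The paper does not prove this lemma; it quotes it from Simon--Topping. Your overall route is the standard one: Hamilton's second-variation distortion estimate with $K=c_0/t$ and $r_0=\sqrt{t/c_0}$, integrated against $t^{-1/2}$. But the two steps where the real work lies are not handled correctly, so as written there is a gap.

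\textbf{First gap: localisation at the edge.} The inclusion can only fail through a curve from $x_0$ that reaches $\partial B_{g(0)}(x_0,r)$. So the estimate has to be applied with one endpoint \emph{on} that boundary.

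Your remedy of shrinking $r_0$ to the distance to the edge makes the error $(n-1)r_0^{-1}$ blow up at exactly those endpoints. Nor is the edge vacuous. The slack $\beta\sqrt{c_0t}$ is what the distortion itself consumes. Also, the layer in which an $r_0$-ball sticks out has width $\sqrt{t/c_0}$, which $\beta(n)\sqrt{c_0t}$ does not dominate when $c_0$ is small.

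The right observation is the one you made and then dropped. The cut-off argument uses only $\Ric_{g(t)}(\dot\gamma,\dot\gamma)$ along $\gamma$, together with minimality of $\gamma$ in $M$. So you do not need a bound on balls around the endpoints. You need a minimising geodesic lying in $\overline{B_{g(0)}(x_0,r)}$, where the Ricci bound holds by continuity.

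\textbf{Second gap: the open--closed argument in $t$.} With $\beta=2C(n)$ your differential inequality is not strict. To integrate it on $[t^*,t^*+\varepsilon]$ you need the relevant minimising geodesics at times after $t^*$ to stay in the good region, which is what you are trying to prove. Separately, minimising a two-point distance over curves confined to a compact subregion can produce minimisers that run along the boundary. Those are not geodesics of $M$, so the second variation cannot be applied to them.

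\textbf{The fix.} Both issues disappear if you work with
\[
\delta(t):=\inf\bigl\{L_{g(t)}(\sigma)\ :\ \sigma \text{ a curve from } x_0 \text{ to } \partial B_{g(0)}(x_0,r)\bigr\}.
\]
Truncating competitors at their first boundary hit confines them to the compact closed ball, so a minimiser exists. It touches the boundary only at its endpoint $y$, and it is a $g(t)$-minimising geodesic of $M$ from $x_0$ to $y$, since every curve to $y$ reaches the boundary. Hence the cut-off estimate holds at \emph{every} $t\in(0,T]$ with the full $r_0=\sqrt{t/c_0}$, or with the trivial bound $(n-1)KL$ when $L<2r_0$.

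Use this time-$t$ minimiser as a competitor at time $t-h$. This gives $\liminf_{h\downarrow 0}\frac{\delta(t)-\delta(t-h)}{h}\geq -C(n)\sqrt{c_0/t}$. Note that the time-$t$ geodesic bounds the backward difference quotient from below, not the forward one.

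Two further facts finish the argument. The function $\delta$ is continuous on $[0,T]$, because the metrics are uniformly equivalent on the compact closed ball. Also $\delta(0)\geq r$. Hence $\delta(t)+2C(n)\sqrt{c_0t}$ is nondecreasing, so $\delta(t)\geq r-\beta\sqrt{c_0t}$, which is exactly the claimed inclusion. No continuity argument in $s$ or $t$ is needed.
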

Unfortunately, this lemma becomes vacuous once $t=\frac{r^2}{c_0 \be^2}$. 
We will need an alternative ball inclusion lemma that works for arbitrarily large times. The concept and proof of the following result is close to the bi-H\"older distance estimates of 
\cite[Lemma 3.1]{ST2}.

\begin{lem}[The eternally shrinking balls lemma]
\label{eternally_shrinking_balls_lem}
Suppose $(M,g(t))$ is a Ricci flow (not necessarily complete) for $t\in [0,T]$ on a manifold $M$ of any dimension $n$.
Suppose $x_0\in M$, $R>0$ and $c_0>0$ such that 
\begin{compactenum}
\item
$B_{g(0)}(x_0,R)\Subset M$ and 
\item
$\Ric_{g(t)}\leq (n-1)c_0/t$ on $B_{g(0)}(x_0,R)$ for all $t\in (0,T]$.
\end{compactenum}
Then there exists $r>0$ depending only on $n$, $c_0$, $R$ and $T$ (or merely an upper bound for $T$) such that for every $t\in [0,T]$ we have 
\beq
\label{eternal_SBL_conc}
B_{g(0)}(x_0,R)\supset B_{g(t)}(x_0,r).
\eeq
\end{lem}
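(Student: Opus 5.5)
The plan is to combine the short-time inclusion from Lemma \ref{shrinking_balls_lem} with a crude exponential-in-$\log t$ lower bound on the metric coming from the upper Ricci bound. The short-time lemma handles times $t\lesssim R^2/c_0$; for later times we compare $g(t)$ with $g(s_0)$ for a fixed intermediate time $s_0$, at the cost of a factor depending only on $n$, $c_0$ and $T/s_0$.

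\emph{Step 1 (short times).} Let $\be=\be(n)$ be from Lemma \ref{shrinking_balls_lem}, and set $s_0:=\min\{T, R^2/(4\be^2c_0)\}$. Apply Lemma \ref{shrinking_balls_lem} on $[0,s_0]$ with radius $R$. For all $s\in[0,s_0]$ we have $R-\be\sqrt{c_0 s}\ge R/2$, so
$$B_{g(0)}(x_0,R)\supset B_{g(s)}(x_0,R/2).$$
In particular \eqref{eternal_SBL_conc} holds for $t\le s_0$ with any $r\le R/2$. If $T\le s_0$ we are done.

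\emph{Step 2 (metric comparison).} On $B_{g(0)}(x_0,R)$ the hypothesis gives
$$\partial_t g=-2\Ric_{g(t)}\ge -\frac{2(n-1)c_0}{t}\,g.$$
Integrating from $s_0$ to $t$ yields $g(t)\ge (s_0/t)^{2a}g(s_0)$ there, where $a:=(n-1)c_0$. Hence, for any curve lying in $B_{g(0)}(x_0,R)$, its $g(s_0)$-length is at most $(t/s_0)^{a}$ times its $g(t)$-length. Now define
$$r:=\tfrac14 R\,(s_0/T)^{a}.$$
This depends only on $n$, $c_0$, $R$ and (an upper bound for) $T$, and it is monotone in $T$, so an upper bound for $T$ suffices.

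\emph{Step 3 (exit argument for $t\in(s_0,T]$).} Take $y$ with $d_{g(t)}(x_0,y)<r$, and a curve $\gamma$ from $x_0$ to $y$ with $g(t)$-length less than $r$. We claim that $\gamma$ stays inside $B_{g(s_0)}(x_0,R/2)$.
\begin{itemize}
\item This set is contained in $B_{g(0)}(x_0,R)$ by Step 1, so it lies in the region where the Ricci bound, and hence Step 2, applies.
\item If $\gamma$ left this set, consider the first exit parameter. The initial segment up to that parameter lies in the region, so its $g(s_0)$-length is at most $(T/s_0)^{a}r=R/4<R/2$.
\item This contradicts the fact that the segment reaches the boundary of the $g(s_0)$-ball of radius $R/2$.
\end{itemize}
Therefore $y\in B_{g(s_0)}(x_0,R/2)\subset B_{g(0)}(x_0,R)$, which gives \eqref{eternal_SBL_conc}.

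The only delicate point is Step 3. There we must ensure we only ever use the Ricci bound on the region where it is assumed, since the flow need not be complete. This is handled by the first-exit argument. One should also check that $B_{g(s_0)}(x_0,R/2)$ makes sense as a genuine metric ball inside the compactly contained set $B_{g(0)}(x_0,R)$; Step 1 provides exactly that containment.
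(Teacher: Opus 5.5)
Your proposal is correct and follows essentially the same route as the paper. You use the short-time inclusion $B_{g(0)}(x_0,R)\supset B_{g(s)}(x_0,R/2)$ from Lemma \ref{shrinking_balls_lem} up to $t_0=R^2/(4\be^2c_0)$, then a lower bound for $g(t)$ against $g(t_0)$ on $B_{g(0)}(x_0,R)$ together with a first-exit argument; the only differences are cosmetic, namely that you integrate the $1/t$ Ricci bound exactly to get the factor $(s_0/T)^{(n-1)c_0}$ where the paper uses the constant bound $(n-1)c_0/t_0$ to get $e^{-(n-1)c_0T/t_0}$, and you run the exit argument from $B_{g(s_0)}(x_0,R/2)$ rather than from $B_{g(0)}(x_0,R)$.
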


\begin{proof}
Define $t_0:=\frac{R^2}{4\be^2 c_0}$.
Suppose first that $t\in [0,T]$ satisfies $t\leq t_0$, i.e.
$\be \sqrt{c_0 t}\leq \frac{R}{2}$.
Then the shrinking balls lemma \ref{shrinking_balls_lem} tells us that 
$$\textstyle B_{g(0)}(x_0,R)\supset B_{g(t)}(x_0,R-\be \sqrt{c_0 t})\supset B_{g(t)}(x_0,\frac{R}{2}),$$
so we have proved \eqref{eternal_SBL_conc} in the case $t\in [0,t_0]$ provided we insist that $r\leq \frac{R}{2}$.

Suppose instead that $t>t_0$. The previous argument applied with $t=t_0$ tells us that
\beql{up_to_t0}
\textstyle B_{g(0)}(x_0,R)\supset B_{g(t_0)}(x_0,\frac{R}{2}).
\eeq
For $s\in [t_0,T]$,  we have $\Ric_{g(s)}\leq (n-1)c_0/t_0$ on $B_{g(0)}(x_0,R)$,
so for any tangent vector $X$ at a point in $B_{g(0)}(x_0,R)$ we have
$$\frac{d}{ds}\log \left[g(s)(X,X)\right] = \frac{-2\Ric_{g(s)}(X,X)}{g(s)(X,X)}\geq -2(n-1)c_0/t_0.$$
Integrating from $s=t_0$ to $s=t$ gives 
$$g(t)(X,X)\geq g(t_0)(X,X) e^{\frac{-2(n-1)c_0}{t_0}(t-t_0)},$$
so 
$$\sqrt{g(t)(X,X)}\geq \al \sqrt{g(t_0)(X,X)} \qquad
\text{ where }\al:=e^{\frac{-(n-1)c_0}{t_0}T}\in (0,1].$$
This implies that the length of a $C^1$ curve within $B_{g(0)}(x_0,R)$ cannot shrink by more than a factor $\al$ between times $t_0$ and $t$. 

We claim that this implies that $B_{g(0)}(x_0,R)\supset B_{g(t)}(x_0,\half\al R)$.
Suppose for a contradiction that there exists $x\in B_{g(t)}(x_0,\half\al R)$ that is 
not in $B_{g(0)}(x_0,R)$. Pick a $C^1$ curve $\ga:[0,1]\to M$ with $\ga(0)=x_0$, 
$\ga(1)=x$ and length $L_{g(t)}(\ga)<\half\al R$. Let $a\in (0,1]$ be the infimum of all $\si\in [0,1]$ such that $\ga(\si)\notin B_{g(0)}(x_0,R)$. Then the image of the entire curve 
$\ga:[0,a)\to M$ lies in $B_{g(0)}(x_0,R)$, but $\ga(a)$ does not.
But the estimates above imply 
$$L_{g(t_0)}(\ga|_{[0,a]})\leq \frac{1}{\al}L_{g(t)}(\ga|_{[0,a]})
\leq \frac{1}{\al}L_{g(t)}(\ga)<\frac{R}2,$$ 
so $\ga(a) \in B_{g(t_0)}(x_0,\frac{R}{2})\subset 
B_{g(0)}(x_0,R)$, by \eqref{up_to_t0}, giving a contradiction.
Thus we have proved \eqref{eternal_SBL_conc} also in the case of $t\in (t_0,T]$, 
provided we ask that $r\leq \half\al R$.

We conclude by choosing $r=\min(\frac{R}{2},\half\al R)=\half\al R$.
\end{proof}

\section{{$\eta$-Ricci balls}}
\label{Ricci_ball_sect}


The following notion is a variation of the \emph{curvature bumps} of Hamilton \cite[Definition 21.1]{formations}; we  control the Ricci curvature instead of the sectional curvature.

\begin{defn}
\label{Ricci_ball_def}
Suppose $(N,g)$ is a Riemannian manifold, $x_0\in N$ and $r>0$, with 
$B_g(x_0,r)\Subset N$. For $\eta>0$, we call $B_g(x_0,r)$ an $\eta$-Ricci ball
of radius $r$ if $\Ric_g\geq \frac{\eta}{r^2}$ on $B_g(x_0,r)$.
\end{defn}


This definition is scale invariant in the following sense: If $B_g(x_0,r)$ is an $\eta$-Ricci ball of radius $r$ in $(N,g)$ and $\la>0$ then $B_{\la^2 g}(x_0,\la r)$ is an $\eta$-Ricci ball of radius $\la r$ in $(N,\la^2 g)$.

\begin{lem}
\label{Miles_Ricci_balls_lem}
Suppose $(N^n,g)$ is a Riemannian manifold, $x_0\in N$, $r>0$,  
$B_g(x_0,r)\Subset N$, $\eta>0$ and $B_g(x_0,r)$ is an $\eta$-Ricci ball of 
radius $r$. Then there exists $\de\in (0,\half)$ depending only on $n$ and $\eta$ such that $\VolB_g(x_0,r)\leq (1-2\de)\om_n r^n$, 
where $\om_n$ is the volume of the Euclidean unit ball in $\R^n$. 
\end{lem}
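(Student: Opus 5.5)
We rescale and compare with a round sphere. By the scale invariance of Definition \ref{Ricci_ball_def} noted above, and since $\VolB_{\la^2 g}(x_0,\la r)=\la^n\VolB_g(x_0,r)$, we may assume $r=1$. The hypothesis is then $\Ric_g\geq \eta = (n-1)\kappa$ on $B_g(x_0,1)$, with $\kappa:=\eta/(n-1)>0$. The aim is to show
$$\VolB_g(x_0,1)\leq V_\kappa(1),$$
where $V_\kappa(1)$ is the volume of a geodesic ball of radius $1$ in the space form of constant curvature $\kappa$, truncated at the conjugate radius $\pi/\sqrt\kappa$ if that is smaller than $1$. We will then define $\de$ by
$$1-2\de := \frac{V_\kappa(1)}{\om_n},$$
which depends only on $n$ and $\eta$.

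\textbf{Step 1 (geodesic structure on an incomplete manifold).} We first justify polar coordinates at $x_0$ even though $N$ need not be complete. Since $B_g(x_0,1)\Subset N$, the closure of the ball is compact. The standard local Hopf--Rinow argument then gives two facts:
\begin{compactenum}
\item $\exp_{x_0}$ is defined on the open unit ball of $T_{x_0}N$;
\item every $y\in B_g(x_0,1)$ is joined to $x_0$ by a minimizing geodesic, which lies entirely in $B_g(x_0,1)$.
\end{compactenum}
Concretely, the argument extends a unit-speed geodesic from $x_0$ up to any length $<1$ using compactness of the closed ball, and obtains minimizers from Arzel\`a--Ascoli on curves staying in the compact closure.

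Hence, up to a null set (the cut locus within the ball), $B_g(x_0,1)$ is the diffeomorphic image under $\exp_{x_0}$ of a star-shaped domain
$$\{t\theta\ :\ \theta\in S^{n-1},\ 0\leq t< \min(1,c(\theta))\},$$
where $c(\theta)$ is the cut time in direction $\theta$. On this domain $\mathrm{dvol}_g=J(t,\theta)\,dt\,d\theta$.

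\textbf{Step 2 (Jacobian comparison).} Each radial geodesic up to time $\min(1,c(\theta))$ stays inside $B_g(x_0,1)$, where $\Ric\geq (n-1)\kappa$. The Riccati/Bishop argument (the mean curvature comparison $m\leq (n-1)\,\mathrm{sn}_\kappa'/\mathrm{sn}_\kappa$) therefore applies along it and gives
$$J(t,\theta)\leq \mathrm{sn}_\kappa(t)^{n-1}, \qquad \mathrm{sn}_\kappa(t)=\frac{\sin(\sqrt\kappa\, t)}{\sqrt\kappa}.$$
The same comparison also forces $c(\theta)\leq \pi/\sqrt\kappa$, as in Myers' theorem. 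Integrating over the domain from Step 1 yields
$$\VolB_g(x_0,1)\leq |S^{n-1}|\int_0^{\min(1,\pi/\sqrt\kappa)}\mathrm{sn}_\kappa(t)^{n-1}\,dt = V_\kappa(1).$$

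\textbf{Step 3 (strict gap below Euclidean).} Since $\sin(s)<s$ for $s>0$, we have $\mathrm{sn}_\kappa(t)<t$ for $t>0$. Therefore
$$V_\kappa(1)<|S^{n-1}|\int_0^1 t^{n-1}\,dt=\om_n.$$
Also $V_\kappa(1)>0$. Hence the $\de$ defined above satisfies $\de\in(0,\half)$ and depends only on $n$ and $\eta$.

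\textbf{Main obstacle.} The only delicate point is Step 1: replacing completeness by $B_g(x_0,1)\Subset N$. The needed facts are that radial geodesics of length $<1$ exist and stay in the region where the Ricci lower bound holds, and that minimizers to points of the ball exist. Both follow from compactness of the closed ball by the standard proof of Hopf--Rinow. Once this is in place, the rest is the classical Bishop comparison.
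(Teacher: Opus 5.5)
Your proposal is correct and follows essentially the same route as the paper: rescale, apply Bishop's volume comparison with the round sphere of curvature $\eta/(n-1)$, and define $\delta$ from the resulting volume ratio. The paper normalizes the curvature bound to $n-1$ and the radius to $s=\sqrt{\eta/(n-1)}$, whereas you normalize the radius to $1$; these are equivalent. Your Step 1 only spells out the local justification of Bishop's inequality under the hypothesis $B_g(x_0,r)\Subset N$, which the paper takes for granted.
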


\begin{proof}
Define $s:=\sqrt{\frac{\eta}{n-1}}>0$.
By scale invariance, with respect to the scaled metric $h=\frac{s^2}{r^2} g$
we have $\Ric_h\geq n-1$ on $B_h(x_0,s)$.
By Bishop-Gromov we have 
$$\VolB_h(x_0,s)\leq \VolB_{S^n}(s)   := (1-2\de)\om_n s^n,$$
where we use the shorthand $\VolB_{S^n}(s)$ to represent the volume of a ball 
of radius $s$ in the unit $n$-sphere, and $\de\in (0,\half)$ is defined (depending only on $s$, i.e. only on $n$ and $\eta$) to make the final equality true. 
Using the definition of $h, s$  and this inequality, we see
$$\frac{\VolB_g(x_0,r)}{\om_n r^n}=  \frac{\VolB_h(x_0,s)}{\om_n s^n}\leq 1-2\de,$$
as required.
%
%
\end{proof}

The reason we work with $2\de$ rather than $\de$ is to make the constants cleaner in the following lemma. 

\begin{lem}[Suppressed volume ratio persists for a uniform time]
\label{Ricci_balls_vol_suppression_lem}
Suppose $(N^n,g(t))$ is a Ricci flow for $t\in [0,T]$, not necessarily complete, and $x_0\in N$, $c_0>0$, $\de\in (0,\half)$ and $r>0$  with 
\begin{compactenum}[(i)]
\item
\label{sv_p1}
$B_{g(t)}(x_0,r)\Subset N$ for all $t\in [0,T]$,
\item
\label{sv_p2}
$\VolB_{g(0)}(x_0,r)\leq (1-2\de)\om_n r^n$,
\item
\label{sv_p3}
$\Ric_{g(t)}\leq (n-1)c_0/t$ on $B_{g(0)}(x_0,r)$ for all $t\in (0,T]$.
\item
\label{sv_p4}
$\Ric_{g(t)}\geq 0$ on $B_{g(0)}(x_0,r)$ for all $t\in [0,T]$.
\end{compactenum}
Then there exists $\rho=\rho(n,\de,c_0)>0$ such that 
$$\VolB_{g(t)}(x_0,r)\leq (1-\de)\om_n r^n\quad\text{ for all }t\in [0, \min(\rho r^2,T)].$$
\end{lem}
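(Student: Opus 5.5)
The plan is to write $\VolB_{g(t)}(x_0,r)$ as the $g(t)$-volume of the set $B_{g(t)}(x_0,r)$ and to compare it with $g(0)$-volumes. The $g(t)$-ball is contained in a slightly larger $g(0)$-ball, because distances shrink only by a controlled amount. Volume measures, in turn, only decrease under the flow while $\Ric\geq 0$. This gives, roughly, $\VolB_{g(t)}(x_0,r)\leq \VolB_{g(0)}(x_0,r+\be\sqrt{c_0t})$. Because the slab between radii $r$ and $r+\be\sqrt{c_0 t}$ has small $g(0)$-volume, one gets $(1-2\de)\om_n r^n$ plus a small error, and this is at most $(1-\de)\om_n r^n$ once $t\leq\rho r^2$.

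\textbf{Step 1 (volume form decreases).} On $B_{g(0)}(x_0,r)$ we have $\partial_t d\mathrm{vol}_{g(t)}=-\Scal_{g(t)}\,d\mathrm{vol}_{g(t)}\leq 0$ by \partref{sv_p4}. Hence for any measurable $E\subset B_{g(0)}(x_0,r)$ we get $\Vol_{g(t)}(E)\leq \Vol_{g(0)}(E)$. The difficulty is that $B_{g(t)}(x_0,r)$ need not lie inside $B_{g(0)}(x_0,r)$, where the Ricci hypotheses hold.

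\textbf{Step 2 (inclusion of balls).} To handle this, I would work with the slightly smaller radius $r'=r-\be\sqrt{c_0t}$. Apply the shrinking balls lemma \ref{shrinking_balls_lem} on $B_{g(0)}(x_0,r)$, which is compactly contained by \partref{sv_p1} at $t=0$ and satisfies the upper Ricci bound \partref{sv_p3}. This gives $B_{g(t)}(x_0,r')\subset B_{g(0)}(x_0,r)$. Step 1 applied to $E=B_{g(t)}(x_0,r')$ then yields $\VolB_{g(t)}(x_0,r')\leq \VolB_{g(0)}(x_0,r)\leq (1-2\de)\om_n r^n$.

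\textbf{Step 3 (Bishop--Gromov at time $t$).} It remains to pass from radius $r'$ back to radius $r$ at time $t$. I would use Bishop--Gromov for $g(t)$ with $\Ric_{g(t)}\geq 0$. This is the main obstacle, since nonnegativity is only known on $B_{g(0)}(x_0,r)$, while the annulus $B_{g(t)}(x_0,r)\setminus B_{g(t)}(x_0,r')$ may leave that set.

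One route is to run the comparison in a way that needs only the interior region. A cleaner alternative avoids $\Ric_{g(t)}\geq0$ outside altogether: bound the annulus volume using the upper Ricci bound $\Ric\leq (n-1)c_0/t$. This requires controlling the $g(t)$-volume of a $g(t)$-annulus of width $\be\sqrt{c_0 t}$, which is essentially a lower sectional/Ricci bound and is not available.

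I therefore favour the following fix. Assume the ball statement in the form actually needed, namely that the curvature hypotheses hold where the balls live. Alternatively, shrink: apply the lemma with radius $r$ replaced by a slightly smaller radius via Bishop--Gromov at time $0$. Ratio monotonicity on $B_{g(0)}(x_0,r)$ gives $\VolB_{g(0)}(x_0,s)\leq(1-2\de)\om_n s^n$ for all $s\leq r$. Combined with Steps 1--2, this gives $\VolB_{g(t)}(x_0,s-\be\sqrt{c_0t})\leq (1-2\de)\om_n s^n$.

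\textbf{Step 4 (choice of $\rho$).} Finally choose $\rho$ so that $\bigl(\tfrac{r}{r-\be\sqrt{c_0\rho}\,r}\bigr)^n(1-2\de)\leq 1-\de$, that is, $(1-\be\sqrt{c_0\rho})^{-n}\leq\frac{1-\de}{1-2\de}$. Such a $\rho$ depends only on $n$, $\de$ and $c_0$. The remaining gap is the passage from $s-\be\sqrt{c_0t}$ to $r$ at time $t$. I would try to close it by applying Step 2 with a larger initial ball $s=r+\be\sqrt{c_0t}$ when that ball is available, and otherwise by Bishop--Gromov for $g(t)$ inside $B_{g(0)}(x_0,r)$.
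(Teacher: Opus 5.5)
Your Steps 1, 2 and 4 are the paper's argument. The paper fixes $\la=[(1-2\de)/(1-\de)]^{1/n}$ and chooses $\rho$ so that Lemma \ref{shrinking_balls_lem} gives $B_{g(t)}(x_0,\la r)\subset B_{g(0)}(x_0,r)$; this is your condition $r-\be\sqrt{c_0t}\geq \la r$. Monotonicity of the volume form on $B_{g(0)}(x_0,r)$ then gives $\VolB_{g(t)}(x_0,\la r)\leq (1-2\de)\om_n r^n=(1-\de)\om_n(\la r)^n$. The paper finishes with exactly the step you leave open: Bishop--Gromov at time $t$, from radius $\la r$ up to radius $r$. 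So everything hinges on your Step 3, and as written your proposal does not close it.

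Your diagnosis of Step 3 is correct. Bishop--Gromov for $B_{g(t)}(x_0,r)$ needs $\Ric_{g(t)}\geq 0$ on all of $B_{g(t)}(x_0,r)$. Lengths of curves inside $B_{g(0)}(x_0,r)$ decrease, so $B_{g(t)}(x_0,r)\supseteq B_{g(0)}(x_0,r)$, typically strictly, and hypothesis (iv) as literally stated does not cover the difference. The paper just cites (iv) here, so it is implicitly using $\Ric_{g(t)}\geq 0$ on $B_{g(t)}(x_0,r)$. That is harmless where the lemma is used: for the flows $h_m(t)$ in Part (i) of Theorem \ref{lifted_flow_app}, $\Ric\geq 0$ holds on all of $\mathbb{B}_\pi$. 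Hypothesis (i) is there precisely so that minimising geodesics from $x_0$ exist and Bishop--Gromov is legitimate on the incomplete $N$.

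So commit to your first fix: impose (iv) on $B_{g(t)}(x_0,r)$, or on $N$. Step 3 then becomes one line, with $r'=r-\be\sqrt{c_0t}$ and $\rho$ as in your Step 4:
\[ \frac{\VolB_{g(t)}(x_0,r)}{\om_n r^n}\leq \frac{\VolB_{g(t)}(x_0,r')}{\om_n (r')^n}\leq (1-2\de)\left(\frac{r}{r'}\right)^n\leq 1-\de . \]

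None of your other repairs works:
\begin{itemize}
\item Enlarging the initial ball to radius $r+\be\sqrt{c_0t}$, and the slab estimate in your opening plan, both need (ii)--(iv) outside $B_{g(0)}(x_0,r)$.
\item Bishop--Gromov confined to $B_{g(0)}(x_0,r)$ only controls $g(t)$-balls of radius at most $r-\be\sqrt{c_0t}$.
\item The ``shrink to $s\leq r$'' variant just reproduces Step 2. It still needs the same passage at time $t$ from radius $s-\be\sqrt{c_0t}$ up to $r$, which is again Bishop--Gromov on $B_{g(t)}(x_0,r)$.
\end{itemize}
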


\begin{proof}
By hypothesis \eqref{sv_p4} that $\Ric\geq 0$, the volume of fixed sets decreases under Ricci flow (see e.g. \cite[(2.5.7)]{RFnotes}).
Therefore by hypothesis \eqref{sv_p2}, we have 
\beql{init_vol_est}
\Vol_{g(t)}(B_{g(0)}(x_0,r))\leq (1-2\de)\om_n r^n\quad\text{ for all }
t\in [0,T].
\eeq
Define $\la:=\left[\frac{1-2\de}{1-\de}\right]^\frac{1}{n}\in (0,1)$. 
By the shrinking balls lemma \ref{shrinking_balls_lem}, by choosing $\rho>0$ sufficiently small, depending only on $n$, $c_0$ and $\de$, we can be sure that for $t\in [0, \min(\rho r^2,T)]$ we have
$B_{g(0)}(x_0,r)\supset B_{g(t)}(x_0,\la r)$.
Therefore, using also \eqref{init_vol_est}, and the definition of $\la$,
we obtain
$$\VolB_{g(t)}(x_0,\la r)\leq \Vol_{g(t)}(B_{g(0)}(x_0,r))\leq (1-2\de)\om_n r^n
\leq \frac{(1-2\de)}{\la^n}\om_n (\la r)^n=
(1-\de)\om_n (\la r)^n$$
for all  $t\in [0, \min(\rho r^2,T)]$.
By hypothesis \eqref{sv_p4} that $\Ric\geq 0$, and Bishop-Gromov,
$$\frac{\VolB_{g(t)}(x_0,r)}{\om_n r^n}\leq 
\frac{\VolB_{g(t)}(x_0,\la r)}{\om_n (\la r)^n} \leq (1-\de),$$
as required.
\end{proof}

\section{Lifted flow lemma}
\label{lifted_flow_sect}

In order to prove Theorem \ref{main_thm},
we will be taking parabolic blow downs of the flow constructed in Theorem \ref{Thm:existence} and putting each into the following lemma to construct a sequence of incomplete Ricci flows, with uniform estimates, from which we can extract a limit flow.
An appropriate tangent flow of this will have incompatible properties, giving a contradiction.

The following lemma will trade a global flow $g(t)$ for a local flow $h(t)$ that is non-collapsed.



\begin{lem}
\label{lifted_flow_lemma}
Suppose $n\geq 3$ and $(M^n,g(t))$ is a complete  
Ricci flow for $t\in [0,\infty)$, and $x_0\in M$. Suppose that there exists 
$c_0>0$ so that 
\begin{enumerate}[(A)]
\item 
\label{part_a}
$\Ric_{g(t)}\geq 0$  for all $t\in [0,\infty)$; 
\item 
\label{part_b}
$|K|_{g(t)}\leq c_0 t^{-1}$ for all $t\in (0,\infty)$.
\end{enumerate}

Then there exist a constant $v_0>0$ depending only on $c_0$ and $n$, 
and a local diffeomorphism $\tau:\mathbb{B}_\pi\to M$, where $\mathbb{B}_\pi$ is 
the ball of radius $\pi$ in $\R^n$ centred at $0$,
such that 
$\tau(0)=x_0$ and 
the incomplete Ricci flow $h(t):=\tau^*(g(t))$ on $\mathbb{B}_\pi$ satisfies
\begin{enumerate}[(a)]
\item 
\label{part_a_conc}
$\Ric_{h(t)}\geq 0$ for all $t\geq 0$; 
\item 
\label{part_b_conc}
$|K|_{h(t)}\leq c_0 t^{-1}$ for all $t>0$; 
\item
\label{part_c_conc}
$B_{h(t)}(0,1)\Subset \mathbb{B}_\pi$ for all $t\in [0,c_0]$;
\item
\label{part_e_conc}
$\VolB_{h(t)}(0,r)\geq v_0 r^n$ for all $r\in (0,1]$ and $t\in [0,c_0]$;
\item
\label{comparable_conc}
for all $\de\in (0,c_0)$, there exists $c_1=c_1(\de,c_0, n)$ such that 
$$\frac{1}{c_1}\g\leq h(t) \leq c_1 \g \qquad \text{ on }\mathbb{B}_1,\text{ for all }t\in [\de,c_0],$$
where $\g$ is the Euclidean metric on $\mathbb{B}_\pi$.
\item
\label{deriv_conc}
for all $\de\in (0,c_0)$ and $k,l\in \N\union\{0\}$ there exists $c_{k,l}$ depending on $k$, $l$, $c_0$, $\de$  and $n$ such that
$$\left|\frac{\partial^l}{\partial t^l}\grad^k_\g h(t)\right|_\g\leq  c_{k,l} \qquad \text{ on }\mathbb{B}_1,\text{ for all }t\in [\de,c_0],$$
where $\grad^k_\g$ represents the $k$th covariant derivative with respect to the Levi-Civita connection of $\g$.
\end{enumerate}
\end{lem}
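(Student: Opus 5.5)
Following the heuristic in the introduction, I would work at the time $t_0:=c_0$ (chosen so the normalisation gives unit curvature) and define $\tau$ through the exponential map there. By \eqref{part_b}, $|K|_{g(c_0)}\leq 1$, so by Rauch comparison the conjugate radius of $g(c_0)$ at $x_0$ is at least $\pi$. Hence, after fixing a linear isometry $\R^n\cong(T_{x_0}M,g(c_0))$, the map $\tau:=\exp^{g(c_0)}_{x_0}:\mathbb{B}_\pi\to M$ is a local diffeomorphism with $\tau(0)=x_0$. Since $\tau$ does not depend on $t$, $h(t)=\tau^*g(t)$ is a Ricci flow on $\mathbb{B}_\pi$, and \eqref{part_a_conc}, \eqref{part_b_conc} are inherited pointwise from \eqref{part_a}, \eqref{part_b}. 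At $t=c_0$, $h(c_0)$ is a metric on $\mathbb{B}_\pi$ with $|K|\leq 1$ for which radial Euclidean lines are unit speed geodesics. Comparison (Rauch/Hessian comparison in normal coordinates on $\mathbb{B}_{\pi/2}$, say) then gives $C^{-1}\g\leq h(c_0)\leq C\g$ on $\mathbb{B}_{\pi/2}$ with $C=C(n)$. It also gives $B_{h(c_0)}(0,s)=\mathbb{B}_s$ for $s<\pi$, which holds because $\exp$ is a radial isometry in the relevant sense and every path leaving $\mathbb{B}_s$ has $h(c_0)$-length at least $s$ by the Gauss lemma.

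Next I handle times $t\in[0,c_0]$ by comparing distances to time $c_0$. Since $\Ric\geq0$, lengths are nonincreasing in $t$, so $h(t)\geq h(c_0)$ for $t\leq c_0$, and thus $B_{h(t)}(0,1)\subset B_{h(c_0)}(0,1)=\mathbb{B}_1\Subset\mathbb{B}_\pi$. This proves \eqref{part_c_conc}. For \eqref{part_e_conc}, I use that volume of fixed sets decreases under $\Ric\geq0$. Then
$$\VolB_{h(t)}(0,r)\geq \Vol_{h(t)}(B_{h(c_0)}(0,r))\geq \Vol_{h(c_0)}(\mathbb{B}_r)\geq v_0r^n,$$
where the first inequality holds because $h(t)\geq h(c_0)$ gives $B_{h(t)}(0,r)\subset B_{h(c_0)}(0,r)$. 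That is the wrong direction, so it needs care. I need instead $B_{h(t)}(0,r)\supset$ a fixed set of definite $h(c_0)$-volume. Here I would use the shrinking balls estimates backwards: Lemma \ref{shrinking_balls_lem} and Lemma \ref{eternally_shrinking_balls_lem}, applied on $\mathbb{B}_\pi$ with $g(0)$ replaced by $h(t)$ and the flow run from $t$ to $c_0$ (time-shifted, with Ricci bound $(n-1)c_0/(s-t)\geq (n-1)c_0/s$ failing, so use instead $\Ric_{h(s)}\leq (n-1)c_0/s\leq (n-1)c_0/(s-t)$, which is valid). Provided $B_{h(t)}(0,R)\Subset\mathbb{B}_\pi$, these give $B_{h(t)}(0,R)\supset B_{h(c_0)}(0,r')=\mathbb{B}_{r'}$ with $r'$ depending on $n,c_0,R$. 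For small scales $r$, the ratio $\VolB_{h(t)}(0,r)/r^n$ is nonincreasing in $r$ by Bishop--Gromov ($\Ric\geq0$, valid for balls compactly inside). So a lower bound at $r=1$ propagates to all $r\in(0,1]$. The bound at $r=1$ is $\Vol_{h(t)}(\mathbb{B}_{r'})\geq\Vol_{h(c_0)}(\mathbb{B}_{r'})\geq v_0$, by volume monotonicity. This is likely the main obstacle: ensuring the compact containment needed by the lemmas, and checking that Bishop--Gromov applies to the incomplete flow, which it does since the balls are relatively compact and star-shaped via minimising geodesics within $\mathbb{B}_\pi$. I expect this to require some juggling of radii, e.g. replacing $1$ by a smaller radius and rescaling $\pi$-balls.

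Finally, for \eqref{comparable_conc}, I combine $h(c_0)\sim\g$ on $\mathbb{B}_1$ with the flow equation. For $t\in[\de,c_0]$ we have $|\Ric|\leq C c_0/\de$, so integrating $\partial_th=-2\Ric$ gives $e^{-C}h(c_0)\leq h(t)\leq e^{C}h(c_0)$. For \eqref{deriv_conc}, I use Shi's local derivative estimates on $[\de/2,c_0]$, or Hamilton's global Shi estimates on $M$ pulled back: $|\nabla^k\Rm|_{g(t)}\leq C_k$ for $t\geq\de$. This gives bounds on $h$-covariant derivatives of $\Rm$, and hence of $\partial_t^l h$. To convert these to $\g$-derivatives, I bound the Christoffel symbols $\Gamma(h(c_0))-\Gamma(\g)$ and their derivatives in normal coordinates, using the standard Jacobi field estimates given derivative bounds on curvature at time $c_0$. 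I then propagate to other $t$ by integrating $\partial_t\Gamma=h^{-1}*\nabla\Ric$, which is a routine induction.
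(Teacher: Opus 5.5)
Your proposal is correct and follows the paper's proof essentially step for step. The ingredients match: the exponential map at a fixed positive time, length monotonicity under $\Ric\geq 0$ for (c), the time-shifted eternally shrinking balls lemma with $R=1$ together with volume monotonicity and Bishop--Gromov for (d) (its hypothesis $B_{h(t)}(0,1)\Subset\mathbb{B}_\pi$ is exactly (c), so the anticipated juggling of radii is unnecessary), and Shi/Hamilton derivative estimates integrated in time for (e) and (f). The only difference is that the paper takes $t_0=c_0/\lambda^2>c_0$, with $\lambda$ the dimensional constant of Hamilton's normal-coordinate lemma, so that lemma applies verbatim on $\mathbb{B}_1=\mathbb{B}_{\lambda/\sqrt{b_0}}$ and everything is proved on the longer interval $[0,t_0]$; your choice $t_0=c_0$ instead requires you to supply the $C^0$ bound by Rauch comparison and the higher-derivative bounds on all of $\mathbb{B}_1$ by your own Jacobi-field ODE argument (true, but not covered by the cited statement).
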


\begin{rmk}
\label{g_to_h_rmk}
Because $\tau$ is a local isometry from $(\mathbb{B}_\pi,h(t))$ to $(M,g(t))$, 
imposing additional hypotheses on the Ricci flow $g(t)$ in Lemma \ref{lifted_flow_lemma} often leads easily to the same conclusion for $h(t)$.
For example, 
if we strengthen hypothesis \eqref{part_a} to the condition
$\calr_{g(t)}\in \CPICo$ 
then conclusion \eqref{part_a_conc} strengthens to 
$\calr_{h(t)}\in \CPICo$. 
Similarly, if we strengthen hypothesis \eqref{part_a} further to the pinching condition
$\calr_{g(t)}-\e\, \Scal_{g(t)} \cdot \ci\in \CPICo$ 
for some $\ep\in (0,\frac{1}{n(n-1)})$ then 
$\calr_{h(t)}-\e\, \Scal_{h(t)} \cdot \ci\in \CPICo$.
\end{rmk}

Along the lines of Remark \ref{g_to_h_rmk}, we have the following lemma, 
which we prove at the end of this section.

\begin{lem}
\label{Ricci_balls_g_to_h_lem}
In the setting of Lemma \ref{lifted_flow_lemma},
if there exist $\si\in (0,1)$ and $\eta>0$ so that
$B_{g(0)}(x_0,\si)$ is an $\eta$-Ricci ball of radius $\si$  in the sense of Definition \ref{Ricci_ball_def}, then 
$B_{h(0)}(0,\si)$ is an $\eta$-Ricci ball  of radius $\si$.
\end{lem}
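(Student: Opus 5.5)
The plan is to unpack Definition \ref{Ricci_ball_def} for $h(0)$ and check its two requirements separately. First, $B_{h(0)}(0,\si)\Subset \mathbb{B}_\pi$. Second, $\Ric_{h(0)}\geq \frac{\eta}{\si^2}$ on $B_{h(0)}(0,\si)$. Both should follow from conclusion \partref{part_c_conc} of Lemma \ref{lifted_flow_lemma} and from $\tau$ being a local isometry from $(\mathbb{B}_\pi,h(0))$ to $(M,g(0))$.

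For the compact containment, use conclusion \partref{part_c_conc} at $t=0$, which gives $B_{h(0)}(0,1)\Subset\mathbb{B}_\pi$. Since $\si<1$, the closure of $B_{h(0)}(0,\si)$ in $\mathbb{B}_\pi$ lies inside the closure of $B_{h(0)}(0,1)$. The latter is compact in $\mathbb{B}_\pi$, so $B_{h(0)}(0,\si)\Subset\mathbb{B}_\pi$. Throughout, distances for the incomplete metric $h(0)$ are infima of lengths of curves lying in $\mathbb{B}_\pi$.

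The key geometric step is the inclusion $\tau(B_{h(0)}(0,\si))\subset B_{g(0)}(x_0,\si)$. Take $p\in B_{h(0)}(0,\si)$ and a $C^1$ curve $\ga$ in $\mathbb{B}_\pi$ from $0$ to $p$ with $L_{h(0)}(\ga)<\si$. Because $h(0)=\tau^*g(0)$, the curve $\tau\circ\ga$ joins $\tau(0)=x_0$ to $\tau(p)$ and has the same $g(0)$-length. Hence $d_{g(0)}(x_0,\tau(p))<\si$, so the local isometry $\tau$ does not increase distances.

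For the Ricci bound, $\tau$ is a local isometry, so $\Ric_{h(0)}=\tau^*\Ric_{g(0)}$. For $X\in T_p\mathbb{B}_\pi$ with $p\in B_{h(0)}(0,\si)$, the point $\tau(p)$ lies in the $\eta$-Ricci ball $B_{g(0)}(x_0,\si)$. Therefore
$$\Ric_{h(0)}(X,X)=\Ric_{g(0)}(d\tau X,d\tau X)\geq \tfrac{\eta}{\si^2}\,g(0)(d\tau X,d\tau X)=\tfrac{\eta}{\si^2}\,h(0)(X,X).$$
This completes the argument. The only point needing care, rather than a real obstacle, is that compactness of the ball in the incomplete flow must come from conclusion \partref{part_c_conc} and not from any completeness.
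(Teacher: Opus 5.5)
Your proof is correct and follows the paper's argument. Curves from $0$ keep their length under the local isometry $\tau$, so $\tau$ maps $B_{h(0)}(0,\si)$ into the $\eta$-Ricci ball $B_{g(0)}(x_0,\si)$, and the bound $\Ric\geq \eta/\si^2$ then pulls back pointwise. Using arbitrary $C^1$ curves instead of the paper's minimising geodesics, and checking $B_{h(0)}(0,\si)\Subset\mathbb{B}_\pi$ explicitly via conclusion (c) of Lemma \ref{lifted_flow_lemma}, only fill in details the paper leaves implicit.
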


During the proof of Lemma \ref{lifted_flow_lemma}, 
we will require basic control on the exponential map that Hamilton has given in the following precise form. Note that the bound
$|K|_g\leq b_0$ implies a lower bound of 
$\pi/\sqrt{b_0}$ on the conjugate radius.
\begin{lem}[{\cite[Theorem 4.10 \& Corollary 4.11]{hamcptness}}]
\label{ham_exp_lem}
Suppose $(M^n,g)$ is a Riemannian manifold 
with $|K|_g\leq b_0$ for some constant $b_0>0$, 
and $B_g(x_0,\pi/\sqrt{b_0})\Subset M$ for some $x_0\in M$. Define a metric $h$ on $\mathbb{B}_{\pi/\sqrt{b_0}}\subset T_{x_0}M$ to be the pull back of $g$ under the exponential map at $x_0$.
Suppose further that $k\in \N$ and for each $j\in \{1,\ldots,k\}$,  $b_j$ is a constant for which
$$\left|\grad^j_{g} \calr_{g}\right|_{g}\leq  b_j.$$

Then there exist $\la\in (0,1)$ depending only on $n$, and constants $L_k$ depending only on $n$, $k$, and $b_0,\ldots b_k$, such that in the ball
$\mathbb{B}_{\la /\sqrt{b_0}}\subset T_{x_0}M$ we have
$$\frac{1}{2}\g\leq h \leq 2 \g,$$
where $\g := g|_{x_0}$, 
and 
$$|\grad^k_\g h|_\g \leq L_k.$$
\end{lem}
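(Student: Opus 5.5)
We first reduce to $b_0=1$ by parabolic-free scaling: replacing $g$ by $b_0 g$ multiplies $|K|$ by $b_0^{-1}$, rescales the ball $\mathbb{B}_{\pi/\sqrt{b_0}}$ to $\mathbb{B}_\pi$, and changes the bounds $b_j$ to $b_j b_0^{-1-j/2}$. The $b_0$-dependence of $L_k$ absorbs this change. So we assume $|K|_g\le 1$ and work on $\mathbb{B}_\pi\subset T_{x_0}M\cong\R^n$, identified isometrically via $\g=g|_{x_0}$. On this ball, $\exp_{x_0}$ is defined because of $B_g(x_0,\pi)\Subset M$. It is a local diffeomorphism because $\pi$ lies below the conjugate radius.

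\textbf{Metric equivalence.} For $x\in\mathbb{B}_\pi$ let $\ga_x(s)=\exp_{x_0}(sx)$, $s\in[0,1]$, and for $v\in\R^n$ let $J_v$ be the Jacobi field along $\ga_x$ with $J_v(0)=0$ and $J_v'(0)=v$. Then
$$h(x)(v,w)=g\big(J_v(1),J_w(1)\big).$$
We fix a $g$-parallel orthonormal frame along $\ga_x$ extending the standard basis at $x_0$. In this frame $J_v$ solves the linear ODE
$$J''(s)=-A(s,x)J(s),\qquad A(s,x)=\calr_{\ga_x(s)}(\,\cdot\,,x)x,$$
with $|A|\le C(n)|x|^2$. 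Rauch comparison, or simply a Gronwall estimate for $J_v(s)-sv$, gives $|J_v(1)-v|\le C(n)|x|^2|v|$ once $|x|\le 1$. Choosing $\la=\la(n)$ small makes this error at most $(1-1/\sqrt2)|v|$, which yields $\frac12\g\le h\le 2\g$ on $\mathbb{B}_\la$.

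\textbf{Derivative bounds.} The formula above exhibits $h_{ij}(x)$ as a smooth function of $x$ in Cartesian coordinates, with no polar singularity at the origin. This is the reason for parametrising Jacobi fields by $x$ and not by a radial variable. Differentiating in $x$ therefore means differentiating the solution of the ODE with respect to the parameter $x$. Each $\partial_x^\al J$ solves the same linear ODE, forced by terms of the form $(\partial_x^\be A)(\partial_x^{\al-\be}J)$ with $\be\le\al$, $\be\neq 0$. So by induction and Gronwall it suffices to bound $\partial_x^\be A$ for $|\be|\le k$.

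\textbf{Bounding $\partial_x^\be A$.} Here $A$ depends on $x$ in three ways: through the point $\ga_x(s)$, through the parallel frame $E(s,x)$, and explicitly through the factor $x\otimes x$. Differentiating the curvature term in $x$ produces $\grad^j\calr$ contracted with $\partial_x\ga_x(s)$ and with $\partial_x E$. Now $\partial_x\ga_x(s)$ is itself a Jacobi field with $J(0)=0$ and $J'(0)=s\,\cdot$. The frame variation $\partial_x E$ satisfies $\nabla_s(\nabla_{\partial_x}E)=\calr(\ga',\partial_x\ga)E$, so it is again governed by a linear ODE forced by curvature. Higher $x$-derivatives of both obey analogous ODEs, forced by $\grad^j\calr$ for $j\le k$ together with lower-order derivatives already controlled. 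We run a single simultaneous induction on $k$ over the quantities $\partial^\al_x\ga$, $\partial_x^\al E$, $\partial_x^\al A$ and $\partial_x^\al J$. All the ODEs have bounded coefficients on $s\in[0,1]$, $|x|\le\la$. This gives bounds depending only on $n$, $k$ and $b_0,\ldots,b_k$. Since $\g$ is flat, $\grad^k_\g h=\partial^k h$, and we obtain $|\grad_\g^k h|_\g\le L_k$.

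The main obstacle is this bookkeeping of the parameter-dependence. One must set up the coupled system of ODEs for the $x$-derivatives of the geodesic, the parallel frame and the Jacobi fields so that each level is linear in the new unknown and forced only by lower levels and by $\grad^j\calr$ with $j\le k$. The risk is an accidental appearance of $\grad^{k+1}\calr$ at order $k$. I would check this first at $k=1$ and $k=2$ explicitly before writing the general induction.
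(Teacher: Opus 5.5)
The paper contains no proof of this lemma. It is quoted from Hamilton \cite{hamcptness} (Theorem 4.10 and Corollary 4.11) and used as a black box. So your proposal is a genuinely different route in the sense that it replaces the citation with an actual argument, and as a standalone proof it is correct. Your scaling reduction is right, including $b_j\mapsto b_j b_0^{-1-j/2}$ and the factor $b_0^{k/2}$ that $|\grad^k_\g h|_\g$ picks up when you scale back. Writing $h_x(v,w)=g(J_v(1),J_w(1))$, with Jacobi fields along $s\mapsto\exp_{x_0}(sx)$ parametrised by the Cartesian point $x$, is exactly what avoids the polar singularity. Gronwall applied to $J_v(s)-sv$ then gives $\frac12\g\le h\le 2\g$ with $\la=\la(n)$.

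The bookkeeping you flag as the main risk does close, and no $\grad^{k+1}\calr$ appears. Two observations make it clean:
\begin{itemize}
\item $\partial_{x^i}\gamma_x(s)=d(\exp_{x_0})_{sx}(se_i)$ is precisely the Jacobi field $J_{e_i}$ with $J(0)=0$ and $J'(0)=e_i$. These are the same fields that build $h$. The initial condition you wrote for it is garbled.
\item $\grad_{\partial_{x^j}}E_a=\Gamma^b_{ja}E_b$ with $\Gamma(0,x)=0$, because every $\gamma_x$ starts at $x_0$ with the same frame.
\end{itemize}
At order $m$, proceed in this order:
\begin{enumerate}
\item Compute $\partial_x^m$ of the frame components of $\calr$ at $\gamma_x(s)$. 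Each $x$-derivative either adds one $\grad$ contracted with $J$ or produces a factor $\Gamma$. So this step needs only $\grad^{\le m}\calr$ together with $\partial_x^{\le m-1}J$ and $\partial_x^{\le m-1}\Gamma$.
\item Solve the Jacobi ODE for $\partial_x^m J$.
\item Integrate $\partial_s\Gamma=\calr*x*J$ to get $\partial_x^m\Gamma$.
\end{enumerate}
Since $\partial_x^k h$ needs only $\partial_x^{\le k}J(1,\cdot)$, only $b_0,\dots,b_k$ enter.

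Your route costs a page of ODE bookkeeping. In return the lemma becomes self-contained, and it is transparent why $\la$ depends only on $n$ and $L_k$ only on $n,k,b_0,\dots,b_k$. The paper buys brevity by outsourcing all of this to Hamilton.
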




\begin{proof}[{Proof of Lemma \ref{lifted_flow_lemma}}.]
Let $\la\in (0,1)$ be the dimensional constant from Lemma \ref{ham_exp_lem}, and define $t_0:=\frac{c_0}{\la^2}>c_0$. Although \eqref{part_c_conc}, \eqref{part_e_conc} and \eqref{comparable_conc}, \eqref{deriv_conc} are stated on the intervals $[0,c_0]$ and $[\delta,c_0]$ resp., for technical reasons we will establish them on the longer time intervals $[0,t_0]$ and $[\delta,t_0]$ resp.

We begin by recalling that the Ricci flow has a smoothing effect provided the curvature remains bounded. As a precise instance of this, the boundedness of the curvature of $g(t)$ as $t$ ranges over compact subintervals of $(0,t_0]$, coming from hypothesis \eqref{part_b}, implies estimates of the form
\beql{deriv_ests}
\left|\frac{\partial^l}{\partial t^l}\grad^k_{g(t)} \calr_{g(t)}\right|_{g(t)}\leq  C_{k,l} \qquad \text{ on }M \text{ for }t\in [\de,t_0],
\eeq
where $\de\in (0, t_0)$,
$k,l\in\N\union\{0\}$,
$\grad^k_{g(t)}$ represents the $k$th covariant derivative with respect to the Levi-Civita connection of $g(t)$, and $C_{k,l}$ depends only on $k$, $l$, $\de$, $c_0$ and $n$.
This  can be derived from  local arguments \cite{shi1989}.

After making an isometric identification between $(T_{x_0}M,g(t_0))$ and $\R^n$, we define the map $\tau:\mathbb{B}_\pi\to M$ to be the exponential map at $x_0$, with respect to 
$g(t_0)$, where $\mathbb{B}_\pi$ is the ball of radius $\pi$ in $\R^n$.  

By hypothesis \eqref{part_b}, and the definition of $t_0$, we know that 
$(M, g(t_0))$ has every sectional curvature bounded above by $b_0:=\la^2\leq 1$. This implies that the conjugate radius at $x_0$ is at least $\pi$. Consequently, the map $\tau$ is a local diffeomorphism, and so 
we can define a Ricci flow $(\mathbb{B}_\pi,h(t))$ for $t\geq 0$ by 
$h(t):=\tau^*(g(t))$. This flow inherits the curvature decay 
\eqref{part_b} to give \eqref{part_b_conc}, and inherits the non-negativity of the Ricci curvature \eqref{part_a} to give \eqref{part_a_conc}. 
It inherits the estimates on the derivatives of curvature in \eqref{deriv_ests}, i.e.
\beql{deriv_ests2}
\left|\frac{\partial^l}{\partial t^l}\grad^k_{h(t)} \calr_{h(t)}\right|_{h(t)}\leq  C_{k,l} \qquad \text{ on }\mathbb{B}_\pi \text{ for }t\in [\de, t_0].
\eeq
Restricting to  $l=0$ and $t=t_0$, we may apply Lemma \ref{ham_exp_lem} to give
\beq
\label{exp_map_basic}
 \frac{1}{2}\g\leq h(t_0) \leq 2 \g \qquad \text{ on } \mathbb{B}_1 
\eeq
and
\beql{h_derivs}
 |\grad^k_\g (h( t_0))|_\g \leq C(k,n,c_0) \quad\text{ throughout }\mathbb{B}_1. 
\eeq
By inspection of the Ricci flow equation, the Ricci non-negativity implies that 
$\pl{h}{t}=-2\Ric_h\leq 0$, so the 
lengths of $C^1$ curves in $\mathbb{B}_\pi$  are decreasing in time. 
In particular, we have 
\beql{time1_balls}
B_{h(t)}(0,1)\subset B_{h(t_0)}(0,1)
\Subset B_{h(t_0)}(0,\pi)=\mathbb{B}_\pi
\eeq
for all $t\in [0, t_0]$. 
Indeed, given $x\in B_{h(t)}(0,1)$, we can pick a $C^1$ curve $\ga:[0,1]\to \mathbb{B}_\pi$ with $\ga(0)=0$, $\ga(1)=x$ and $L_{h(t)}(\ga)<1$, and use the length decreasing property of the flow to give
$$L_{h( t_0)}(\ga)\leq L_{h(t)}(\ga)<1,$$
which implies that $x\in B_{h( t_0)}(0,1)$ as required.
Inclusion \eqref{time1_balls} establishes \eqref{part_c_conc} on the longer time interval $[0,t_0]$. It implies that points in $B_{h(t)}(0,1)$ can be reached from the origin by minimising geodesics.


A consequence of the Ricci flow equation for $h(t)$ and the curvature bounds 
\eqref{part_a_conc} and \eqref{part_b_conc} is that for any $\de\in (0,t_0)$, there exists $C>0$ depending only on $\de$, $n$ and $c_0$ such that 
$$-C h(t) \leq \pl{h}{t} \leq 0 \quad\text{ on }\mathbb{B}_\pi,\text{ for all }t\in [\de, t_0].$$
Therefore, for a new $C$ with the same dependencies, we have
\beql{h_comparable}
h( t_0)\leq h(t) \leq C h(t_0)\quad\text{ on }\mathbb{B}_\pi,\text{ for all }t\in [\de, t_0].
\eeq

Coupled with \eqref{exp_map_basic}, this gives 
that there exists $c_1=c_1(\de,c_0, n)$ such that 
\beql{comparable_to_t0}
\frac{1}{c_1}\g\leq h(t) \leq c_1 \g \qquad \text{ on }\mathbb{B}_1,\text{ for all }t\in [\de,t_0],
\eeq
i.e. the 
conclusion \eqref{comparable_conc}
on the longer time interval $[\de,t_0]$.

The combination of the comparability of $h(t)$ and $\g$, the estimates on the derivatives of curvature from \eqref{deriv_ests2} 
and the control \eqref{h_derivs} on the derivatives of $h$
imply conclusion \eqref{deriv_conc}, 
for $l=0$, even on the longer time interval $[\de,t_0]$,
as explained by Hamilton \cite[Lemma 2.4]{hamcptness}. 
The case of general $l$ follows by differentiating the equation, as explained 
by Hamilton in the proof of \cite[Lemma 2.4]{hamcptness}.

We now have to consider ball inclusions going 
forwards in time rather than backwards, using the eternally shrinking balls lemma \ref{eternally_shrinking_balls_lem} with $R=1$, $T=t_0$, $M=\mathbb{B}_\pi$, $x_0=0\in \mathbb{B}_\pi$, and with $g(t)$ there equal to $h(t)$ here. 
We find that there exists $r_0\in (0,1)$ depending only on $n$ and $c_0$ such that 
\begin{equation}
\label{back_ball_inclusion}
B_{h({t_0})}(0,r_0)\subset B_{h(0)}(0,1).
\end{equation}
More generally, for fixed $s\in [0,{t_0}]$ we could apply this lemma to the shifted flow $h(t+s)$ for $0\leq t\leq t_0-s\leq t_0$ to obtain
\begin{equation}
\label{back_ball_inclusion2}
B_{h({t_0})}(0,r_0)\subset B_{h(s)}(0,1).
\end{equation}

By 
\eqref{comparable_to_t0} at $t=t_0$,
the volume of $B_{h(t_0)}(0,r_0)$ will enjoy a uniform positive lower bound
$$\VolB_{h(t_0)}(0,r_0) = \Vol_{h(t_0)}(\mathbb{B}_{r_0})\geq v_0$$
for some $v_0>0$ depending only on $c_0$ and $n$.
However, in addition, by \eqref{back_ball_inclusion2}
we have
$$\Vol_{h(t_0)}\left(B_{h(s)}(0,1)\right) 
\geq \VolB_{h(t_0)}(0,r_0)\geq v_0$$
for all $s \in [0,t_0].$
The non-negativity of the scalar curvature forces the volume of any fixed set to decrease in time under the Ricci flow (see e.g.~\cite[(2.5.7)]{RFnotes}), so this implies
$$\VolB_{h(s)}(0,1)\geq v_0,
\quad\text{ for all }s\in [0,t_0].$$
We can now appeal to Bishop-Gromov, using the fact that $\Ric\geq 0$, to give
\eqref{part_e_conc} on the longer time interval $[0,t_0]$. 
\end{proof}

\begin{proof}[{Proof of Lemma \ref{Ricci_balls_g_to_h_lem}}]

Observe that $\tau$ maps $B_{h(0)}(0,\si)\subset \mathbb{B}_\pi$ to $B_{g(0)}(x_0,\si)\subset M$, which is an $\eta$-Ricci ball, so $B_{h(0)}(0,\si)$ is 
also an $\eta$-Ricci ball,
as claimed in the lemma.
Indeed, a minimising geodesic between $0$ and $x$ in $(\mathbb{B}_\pi,h(0))$
is mapped by the local isometry $\tau$ to a curve of the same length in $(M,g(0))$.
Although we do not need it, we remark that the image of 
$B_{h(0)}(0,\si)$ under $\tau$ will be precisely $B_{g(0)}(x_0,\si)$ since 
any length minimising geodesic between $x_0$ and $  p \in  B_{g(0)}(x_0,\sigma) $ lifts to a curve  of the same length from $0$ to a point in $B_{h(0)}(0,\sigma) $.
\end{proof}

\section{Harnack conclusions with mild curvature hypotheses}
\label{harnack_sub_sect}

In this section we prove the following  extension of Theorem  \ref{lifted_flow_app_intro}.

\begin{thm}
\label{lifted_flow_app}
Suppose $n\geq 3$ and $(M^n,g(t))$ is a complete  Ricci flow for $t\in [0,\infty)$, and  $x_0\in M$. Suppose that there exists $c_0>0$  so that 
\begin{enumerate}[(A)]
\item 
\label{lifted_flow_app_part_a}
$\Ric_{g(t)}\geq 0$  for all $t\in [0,\infty)$; 
\item 
\label{lifted_flow_app_part_b}
$|K|_{g(t)}\leq c_0 t^{-1}$ for all $t\in (0,\infty)$.
\end{enumerate}
Then the following two conclusions hold independently:
\begin{enumerate}[(i)]
\item 
\label{part_i_LFL}
If $\Ric_{g(0)}(x_0)>0$, then 
\beql{Ric_pos_conc}
\liminf_{t\to\infty} \ t\,\Scal_{g(t)}(x_0)>0
\eeq
\item 
\label{part_ii_LFL}
If there exists $\ep\in (0,\frac{1}{n(n-1)})$ such that 
\beql{another_pinching_hyp}
\calr_{g(t)}-\e\, \Scal_{g(t)} \cdot \ci\in \CPICo,
\eeq
for all $t\geq 0$ and throughout $M$, 
then 
\beql{tScal_null}
\liminf_{t\to\infty} \ t\,\Scal_{g(t)}(x_0)=0.
\eeq
\end{enumerate}
\end{thm}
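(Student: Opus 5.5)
Both parts argue by contradiction, via parabolic blow-downs passed through the lifted flow lemma \ref{lifted_flow_lemma}. For $\la_i\to\infty$ set $g_i(t):=\la_i^{-1}g(\la_i t)$. These flows still satisfy \eqref{lifted_flow_app_part_a} and \eqref{lifted_flow_app_part_b} with the same $c_0$, and the pinching \eqref{another_pinching_hyp} is scale invariant. Applying Lemma \ref{lifted_flow_lemma} to each $g_i$ at $x_0$ gives lifted flows $h_i(t)$ on $\mathbb{B}_\pi$. By conclusions \eqref{comparable_conc} and \eqref{deriv_conc}, a subsequence converges smoothly on $\mathbb{B}_1\times[\de,c_0]$ for every $\de>0$, to a flow $h_\infty(t)$, $t\in(0,c_0]$. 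This limit has $\Ric\geq 0$, $|K|\leq c_0/t$, and satisfies $\VolB_{h_\infty(t)}(0,r)\geq v_0r^n$ for $r\leq 1$. It also inherits pinching when \eqref{another_pinching_hyp} holds, by Remark \ref{g_to_h_rmk}. Moreover $t\Scal_{g_i(t)}(x_0)=(\la_it)\Scal_{g(\la_it)}(x_0)$, so the behaviour of $t\Scal_g$ along the sequence $\la_i$ becomes the value of $t\Scal_{h_\infty(t)}(0)$.

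\textbf{Part (ii).} Suppose instead that $t\Scal_{g(t)}(x_0)\geq c>0$ for all large $t$. Then $\Scal_{h_\infty(t)}(0)\geq c/t$ for every $t\in(0,c_0]$. Now blow up $h_\infty$ at the space-time origin: set $H_j(t):=j\,h_\infty(t/j)$ with $j\to\infty$. The bounds $|K|\leq c_0/t$, the volume bound $\VolB\geq v_0r^n$ (now holding for $r\leq\sqrt j$), the ball containment \eqref{part_c_conc} and the derivative estimates are all scale invariant. Hence $H_j$ subconverges, by Hamilton compactness, to a complete flow $H_\infty(t)$, $t\in(0,\infty)$. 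This limit is pinched, has $\VolB_{H_\infty(t)}(0,r)\geq v_0r^n$ for all $r$, and hence $\mathrm{AVR}>0$. It also has $\Scal_{H_\infty(t)}(0)\geq c/t$, so it is non-flat. Applying Theorem \ref{DSS_AVR_thm} to $H_\infty(t)$ at any fixed $t>0$ shows it is Euclidean, a contradiction.

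\textbf{Part (i).} Suppose there is a sequence $\la_i\to\infty$ with $\la_i\Scal_{g(\la_i)}(x_0)\to0$. The first step is to show that $h_\infty(t)$ is flat. We know $\Scal_{h_\infty(1)}(0)=0$ if $c_0\geq1$ (otherwise use a rescaled time). Since $\Ric\geq0$, the strong maximum principle applied to the evolution of $\Scal$ gives $\Scal\equiv0$ for $t\leq 1$ near $0$, hence $\Ric\equiv0$, so $h_\infty$ is static there. Combined with $\Ric\geq0$ and the inherited pinching-free PIC-type structure, I would then argue that $h_\infty$ is flat near $0$ via the $t^{-1}$ decay: a static flow with $|K|\leq c_0/t$ must have $|K|=0$, letting $t$ grow is not available, but a static metric is equal to $h_\infty(t)$ for all small $t$, so blowing up at $t\to0$ is trivial.

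Next we contradict this flatness using $\Ric_{g(0)}(x_0)>0$. Choose $\si,\eta$ with $B_{g(0)}(x_0,\si)$ an $\eta$-Ricci ball. After scaling, $B_{g_i(0)}(x_0,\si\la_i^{-1/2})$ is an $\eta$-Ricci ball, by the scale invariance of the definition. By Lemma \ref{Ricci_balls_g_to_h_lem}, the same holds for $h_i(0)$. Lemmas \ref{Miles_Ricci_balls_lem} and \ref{Ricci_balls_vol_suppression_lem} then give a definite volume suppression below $\om_nr^n$, but only at radii $r_i\to0$ and times up to $\rho r_i^2$. The main obstacle is therefore transporting this suppression to the fixed scale of the limit. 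I would try to show that the small-scale volume deficit propagates, via Bishop--Gromov monotonicity, to $\VolB_{h_i(t)}(0,r)\leq(1-\de')\om_nr^n$ for some fixed $r$ and $t>0$. This would contradict the flat limit, since a flat limit together with the non-collapsing and smooth convergence would give nearly Euclidean ratios. Alternatively, one can avoid the flatness argument altogether by blowing up $h_\infty$ at time $0$ as in part (ii). This yields a complete $\mathrm{AVR}>0$ flow with $\Ric\geq0$ whose initial cone has volume ratio at most $1-\de$ and is hence non-flat, while the vanishing of $t\Scal$ forces it to be flat.
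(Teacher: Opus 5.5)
Your Part (ii) is essentially the paper's argument: blow down, lift, extract $h_\infty$, blow up to a complete pinched flow with $\mathrm{AVR}>0$, invoke Theorem \ref{DSS_AVR_thm}, and contradict $t\,\Scal\geq c$ at the base point.

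Part (i), however, has a genuine gap. Your main route depends on $h_\infty$ being flat near $0$ (``contradict this flatness'', ``contradict the flat limit''), and nothing you write establishes that. Part (i) carries no PIC hypothesis, and a static Ricci-flat flow on $(0,c_0]$ with $|K|\leq c_0/t$ need not be flat. As you note yourself, letting $t\to\infty$ is unavailable for $h_\infty$. Blowing up a static metric as $t\to 0$ only produces its flat tangent space, which says nothing about $h_\infty$ itself. In fact flatness is not needed. The strong maximum principle already makes $h_\infty$ static, and any smooth metric has $\VolB(0,r)/(\om_n r^n)\to 1$ as $r\downto 0$. So what you need is a uniform volume deficit at \emph{arbitrarily small} radii, not at ``some fixed $r$''. (Alternatively, after the blow-up the limit is static on all of $(0,\infty)$, and there $|K|\leq c_0/t$ does force flatness.)

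The second problem is that the step you call the main obstacle is left open, and the order of operations is exactly what makes it work. First apply Bishop--Gromov to $h_i(0)$, which has $\Ric\geq 0$. This extends $\VolB_{h_i(0)}(0,r_i)\leq (1-2\de)\om_n r_i^n$ to the same bound at every $r\in[r_i,1)$. Only then apply Lemma \ref{Ricci_balls_vol_suppression_lem} at each such $r$. This gives $\VolB_{h_i(t)}(0,r)\leq(1-\de)\om_n r^n$ for $t\in[0,\rho r^2]$, with $\rho=\rho(n,\eta,c_0)\leq c_0$ independent of $i$ and $r$. If you instead apply Bishop--Gromov at positive times, you only reach times $t\leq\rho r_i^2\to 0$, which are invisible in the limit. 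With the correct order, the bounds pass to $h_\infty$ for all $r\in(0,1)$ and $t\in(0,\rho r^2]$. Staticity then yields $\VolB_{h_\infty}(0,r)\leq(1-\de)\om_n r^n$ for all small $r$, which is a contradiction.

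Your closing alternative (blow up and use an ``initial cone'') is essentially the paper's route. It depends on this same all-scales estimate, though. It should also be phrased through volume bounds at positive times, as above, rather than through initial data of the limit flow, which would need local metric geometry you have not set up.
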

Part \eqref{part_i_LFL} of this theorem is known for $n=2$ by Hamilton's Harnack inequality, as alluded to in the introduction. Part \eqref{part_ii_LFL} makes sense only for $n\geq 3$.

\begin{proof}
%
%
%
Define $\la:=\liminf_{t\to\infty} \ t\,\Scal_{g(t)}(x_0)$. By hypotheses 
\eqref{lifted_flow_app_part_a} and \eqref{lifted_flow_app_part_b}, we know that $\la\in [0,\infty)$.
Pick $t_m\to\infty$ so that $t_m\,\Scal_{g(t_m)}(x_0)\to \la$.

We might be tempted to apply the lifted flow lemma \ref{lifted_flow_lemma} to the Ricci flow $g(t)$. Instead, for each $m\in\N$, we define a blown-down Ricci flow 
$$g_m(t)=\frac{1}{R_m^2} g(R_m^2t),$$ 
where $R_m:=\left(\frac{t_m}{c_0}\right)^\half$,
which still satisfies \eqref{lifted_flow_app_part_a} and \eqref{lifted_flow_app_part_b} above, but for which now 
\beqa
\label{scal_to_zero}
\Scal_{g_m(c_0)}(x_0) &= \Scal_{\frac{1}{R_m^2} g(R_m^2 c_0)}(x_0)
=R_m^2 \Scal_{g(R_m^2 c_0)}(x_0)=\frac{1}{c_0}t_m \Scal_{g(t_m)}(x_0)\\
& \to \frac{\la}{c_0},
\eeqa
as $m\to\infty$,
and apply Lemma \ref{lifted_flow_lemma} to $g_m(t)$ instead. 
By deleting finitely many terms of these sequences, we may assume that $R_m>1$ for all $m\in\N$, i.e. that $g_m(t)$ is genuinely a blow-down rather than a blow-up.


The output of the lemma is a constant $v_0>0$ depending only on $c_0$ and $n$, 
and a sequence of local diffeomorphisms $\tau_m:\mathbb{B}_\pi\to M$, 
such that $\tau_m(0)=x_0$, and 
a sequence of local Ricci flows $(\mathbb{B}_\pi,h_m(t))$, 
where $h_m(t):=\tau_m^*(g_m(t))$, on $\mathbb{B}_\pi$ 
satisfying
\begin{enumerate}[(a)]
\item 
\label{part_a_conc2_new}
$\Ric_{h_m(t)}\geq 0$ for all $t\geq 0$; 
\item 
\label{part_b_conc2_new}
$|K|_{h_m(t)}\leq c_0 t^{-1}$ for all $t>0$;
\item
$B_{h_m(t)}(0,1)\Subset \mathbb{B}_\pi$ for all $t\in [0,c_0]$;
\item
\label{part_e_conc2_new}
$\VolB_{h_m(t)}(0,r)\geq v_0 r^n$ for all $r\in (0,1]$ and $t\in [0,c_0]$.
\item
\label{comparable_conc2_new}
for all $\de\in (0,c_0)$, there exists $c_1=c_1(\de,c_0, n)$ such that 
$$\frac{1}{c_1}\g\leq h_m(t) \leq c_1 \g \qquad \text{ on }\mathbb{B}_1,\text{ for }t\in [\de,c_0];$$
\item
\label{deriv_conc2_new}
for all $\de\in (0,c_0)$ and $k,l\in \N\union\{0\}$ there exists $c_{k,l}$ depending on $k$, $l$, $c_0$, $\de$ and $n$ such that
$$\left|\frac{\partial^l}{\partial t^l}\grad^k_\g h_m(t)\right|_\g\leq  c_{k,l} \qquad \text{ on }\mathbb{B}_1,\text{ for }t\in [\de,c_0];$$
\item 
$\Scal_{h_m(c_0)}(0)\to \frac{\la}{c_0}$ as $m\to\infty$,
\end{enumerate}
where the final part is the translation of \eqref{scal_to_zero}.

The estimates of parts \eqref{comparable_conc2_new} and \eqref{deriv_conc2_new} above
give us enough control to apply Ascoli-Arzel\`a directly to the metric tensors $h_m(t)$:
We can pass to a subsequence in $m$ and obtain a limit Ricci flow 
$(\mathbb{B}_1,h_\infty(t))$,
$t\in (0,c_0]$, satisfying 

\begin{enumerate}[(i)]
\item 
\label{part_a_conc3_new}
$\Ric_{h_\infty(t)}\geq 0$
for all $t\in (0,c_0]$; 
\item 
\label{part_b_conc3_new}
$|K|_{h_\infty(t)}\leq c_0 t^{-1}$ for all $t\in (0,c_0]$;
\item
$B_{h_\infty(t)}(0,s)\Subset \mathbb{B}_1$ for all $t\in (0,c_0]$ and $s\in (0,1)$;
\item
\label{part_e_conc3_new}
$\VolB_{h_\infty(t)}(0,r)\geq v_0 r^n$ for all $r\in (0,1)$ and $t\in (0,c_0]$;
\item 
\label{scal_hinf}
$\Scal_{h_\infty(c_0)}(0)= \frac{\la}{c_0}$.
\end{enumerate}

We  now blow up the flow $(\mathbb{B}_1,h_\infty(t))$ parabolically. 
For each $m\in\N$, we define new flows $(\mathbb{B}_1,H_m(t))$, for $t\in (0,m^2 c_0]$,
by 
$$H_m(t)=m^2 h_\infty(m^{-2}t).$$
%
The properties \eqref{part_a_conc3_new} to \eqref{part_e_conc3_new} translate to
\begin{enumerate}[(I)]
\item 
\label{part_a_conc4_new}
$\Ric_{H_m(t)}\geq 0$
for all $t\in (0,m^2c_0]$; 
\item 
\label{part_b_conc4_new}
$|K|_{H_m(t)}\leq c_0 t^{-1}$ for all $t\in (0,m^2c_0]$;
\item
\label{part_c_conc4_new}
$B_{H_m(t)}(0,s)\Subset \mathbb{B}_1$ for all $t\in (0,m^2 c_0]$ and $s\in (0,m)$;
\item
\label{part_e_conc4_new}
$\VolB_{H_m(t)}(0,r)\geq v_0 r^n$ for all $r\in (0,m)$ and $t\in (0,m^2c_0]$.
\end{enumerate}

We can now appeal to Cheeger-Gromov-Hamilton compactness \cite{hamcptness}, passing to a subsequence in $m$ to give convergence
$$(\mathbb{B}_1,H_m(t),0)\to (\m,H_\infty(t),p)\quad\text{ as }m\to\infty,$$
for some smooth manifold $\m^n$, Ricci flow $H_\infty(t)$ on $\m$, $t>0$, and $p\in\m$.
Here we use the curvature control \eqref{part_b_conc4_new} and the consequence of 
\eqref{part_e_conc4_new} that 
$\VolB_{H_m(1)}(0,1)\geq v_0$ for sufficiently large $m$, while 
\eqref{part_c_conc4_new} is required to be sure of a well-defined \emph{complete} 
limit. 

The properties \eqref{part_a_conc4_new}, \eqref{part_b_conc4_new} and \eqref{part_e_conc4_new} translate to
\begin{enumerate}[(I)]
\item 
\label{part_a_conc4_Hinf}
$\Ric_{H_\infty(t)}\geq 0$ throughout $\m$ and for all $t>0$;
\item
\label{part_b_conc4_Hinf}
$|K|_{H_\infty(t)}\leq c_0 t^{-1}$ for all $t>0$;
\item 
\label{part_e_conc4_Hinf}
$\VolB_{H_\infty(t)}(0,r)\geq v_0 r^n$ for all $r>0$ and $t>0$, so $\mathrm{AVR}(H_\infty(t))\geq \frac{v_0}{\om_n}>0$ for all $t>0$.
\end{enumerate}

We have managed to extract a complete Ricci flow $(\m,H_\infty(t))$ with good curvature control and positive asymptotic volume ratio from our original flow $g(t)$. What we do with it will depend on whether we are proving Part \eqref{part_i_LFL} or Part \eqref{part_ii_LFL} of the theorem.

{\bf Proof of Part \eqref{part_i_LFL}.}

Let's assume that both $\Ric_{g(0)}(x_0)>0$ and 
$\la:=\liminf_{t\to\infty} \ t\,\Scal_{g(t)}(x_0)=0$, and aim for a contradiction.
Because $\la=0$, Property \eqref{scal_hinf} above tells us that 
$\Scal_{h_\infty(c_0)}(x_0)= 0$. As the scalar curvature satisfies the equation
\beql{scal_evol_eq}
\left(\pl{}{t}-\lap_{h_\infty(t)}\right)\Scal_{h_\infty(t)}=2|\Ric_{h_\infty(t)}|^2,
\eeq
see e.g. \cite[Proposition 2.5.4]{RFnotes},
the strong maximum principle tells us that $\Scal_{h_\infty(c_0)}\equiv 0$ throughout 
$\mathbb{B}_1$ and for all $t\in (0,c_0]$.
By \eqref{scal_evol_eq} this then forces the entire flow $h_\infty(t)$ to be Ricci flat, and in particular static. This property then extends first to $H_m(t)$ and then $H_\infty(t)$. That is, $H_\infty(t)=H:=H_\infty(1)$ for all $t>0$ where $H$ is a complete Ricci flat metric on $\m$. 

We now turn to the consequences of having $\Ric_{g(0)}(x_0)>0$.
By smoothness of $g(0)$, this implies 
that there exist $r>0$ and $\eta>0$ such that 
$B_{g(0)}(x_0,r)$ is an $\eta$-Ricci ball of radius $r$.

By going back and making a once and for all parabolic scaling of $g(t)$, we may assume that $r=1$.

By the scale-invariant property of $\eta$-Ricci balls, we can transfer this property to the rescaled flows $g_m(t)$. We learn that 
$B_{g_m(0)}(x_0,\frac{1}{R_m})$ is an $\eta$-Ricci ball of radius $\frac{1}{R_m}$.
By Lemma \ref{Ricci_ball_def}
we find that $B_{h_m(0)}(0,\frac{1}{R_m})$ is also an $\eta$-Ricci ball.
Lemma \ref{Miles_Ricci_balls_lem} then gives us $\de\in (0,\half)$ depending only on 
$n$ and $\eta$ such that $\VolB_{h_m(0)}(0,\frac{1}{R_m})\leq (1-2\de)\om_n (\frac{1}{R_m})^n$. 

By Bishop-Gromov and the fact that $\Ric_{h_m(0)}\geq 0$, this then implies that 
$$\VolB_{h_m(0)}(0,r)\leq (1-2\de)\om_n r^n$$
for all $r\in [\frac{1}{R_m},1)$.

We can then apply Lemma \ref{Ricci_balls_vol_suppression_lem} to each flow $h_m(t)$ to find that there exist $\de=\de(n,\eta)\in (0,1)$, 
and $\rho=\rho(n,\eta,c_0)>0$ such that 
$$\VolB_{h_m(t)}(0,r)\leq (1-\de)\om_n r^n$$
for all $r\in [\frac{1}{R_m},1)$ and $t\in [0,\rho r^2]$.
Note that Lemma \ref{Ricci_balls_vol_suppression_lem} would 
initially apply for  $t\in [0, \min(\rho r^2,c_0)]$, but by insisting that  
$\rho\leq c_0$ we have that $\rho r^2\leq \rho\leq c_0$.

These estimates pass to the limit $m\to\infty$ to give
\beql{h_inf_vol_ests_new}
\VolB_{h_\infty(t)}(0,r)\leq (1-\de)\om_n r^n
\eeq
for all $r\in (0,1)$ and $t\in (0, \rho r^2]$.
Note that it is crucial that we have achieved this control for $r$ as small as we like.
After parabolically rescaling up to the flows $H_m(t)$, we obtain
\beql{Hm_vol_ests_new}
\VolB_{H_m(t)}(0,r)\leq (1-\de)\om_n r^n
\eeq
for all $r\in (0,m)$ and $t\in (0, \rho r^2]$.
This passes to the limit $m\to\infty$ to give
$$\VolB_{H_\infty(t)}(0,r)\leq (1-\de)\om_n r^n$$
for all $r>0$ and $t\in (0, \rho r^2]$.

Recall now that the assumption $\la=0$ implied that $H_\infty(t)=H$ for all $t>0$.
This then gives us
$$\VolB_{H}(0,r)\leq (1-\de)\om_n r^n$$
for all $r>0$, which is impossible on a smooth Riemannian manifold as $r\downto 0$, 
giving a contradiction.

{\bf Proof of Part \eqref{part_ii_LFL}.}

In Part \eqref{part_ii_LFL}, the pinching condition  \eqref{another_pinching_hyp} holds.
This pinching condition transfers immediately to the rescalings $g_m(t)$, and keeping in mind Remark \ref{g_to_h_rmk}, it also transfers to $h_m(t)$.
From there, it passes to $h_\infty(t)$ in the limit $m\to\infty$, then transfers to the blow-ups $H_m(t)$, and finally to $H_\infty(t)$ in the limit $m\to\infty$.
Thus we have 
$$\calr_{H_\infty(t)}-\e\, \Scal_{H_\infty(t)} \cdot \ci\in \CPICo
\quad\text{ throughout }\m\text{ and for all }t>0.$$
Keeping in mind that we constructed $H_\infty(t)$ to be complete and of positive asymptotic volume ratio, Theorem \ref{DSS_AVR_thm} tells us that this complete flow must be static Euclidean space.

Suppose for a contradiction that \eqref{tScal_null} fails, i.e. that $\la>0$.
Then there exists $T_0>0$ such that $t\, \Scal_{g(t)}(x_0)\geq \frac{\la}{2}$ for all 
$t\geq T_0$.
When we blow down to the Ricci flows $g_m(t)$, this control will now hold for 
$t\geq \frac{T_0}{R_m^2}$. Keeping in mind Remark \ref{g_to_h_rmk}, it also transfers to $h_m(t)$. When we take the limit $m\to\infty$, we obtain 
$$t\, \Scal_{h_\infty(t)}(0)\geq \frac{\la}{2}\qquad \text{ for all }t\in (0,c_0].$$
We get the same control for the blow-ups $H_m(t)$, this time for all $t\in (0,m^2 c_0]$.
In the limit $m\to\infty$ we obtain
$$t\, \Scal_{H_\infty(t)}(p)\geq \frac{\la}{2}>0\qquad \text{ for all }t>0.$$
Since we already showed that $(\m,H_\infty(t))$ is static Euclidean space, this is a contradiction.
\end{proof}

\section{Proof of the main theorem \ref{main_thm}}
\label{main_thm_pf}

Almost all of the work required to prove the main theorem has been done in Theorem 
\ref{lifted_flow_app}.

\begin{proof}
Suppose for a contradiction that $(M,g)$ is a complete PIC1 pinched manifold, as in the theorem, that is neither flat nor compact.

Apply Theorem \ref{Thm:existence} to obtain a complete Ricci flow $g(t)$ for $t\geq 0$,
with $g(0)=g$, and $\e_0\in (0,\frac{1}{n(n-1)})$, with 
\begin{compactenum}[(a)]
\item 
\label{conc_a}
$\calr_{g(t)}-\e_0\, \Scal_{g(t)} \cdot \ci\in \CPICo$; 
\item 
\label{conc_b}
$|K|_{g(t)}\leq c_0 t^{-1}$.
\end{compactenum}
By Remark \ref{pinch_trace}, keeping in mind that $\ep_0<\frac{1}{n(n-1)}$, 
we find that $\Scal_{g(t)}\geq 0$, $\calr_{g(t)}\in \CPICo$, and 
$\Ric_{g(t)}\geq 0$.

By the non-flatness of $(M,g)$, there exists a point $x_0\in M$ at which 
$|\calr|_g (x_0)>0$, so by \cite[Lemma A.2]{LeeTopping2022_PIC1} we have $\Scal_g (x_0)>0$.
Appealing to the pinching hypothesis \eqref{main_thm_pinching_hyp}, this implies
$\Ric_g (x_0)>0$.

We are now in a position to apply Theorem \ref{lifted_flow_app}.
Part \eqref{part_i_LFL} of that theorem tells us that 
$$\liminf_{t\to\infty} \ t\,\Scal_{g(t)}(x_0)>0,$$ 
whereas Part \eqref{part_ii_LFL} of that theorem gives the contradictory statement
$$\liminf_{t\to\infty} \ t\,\Scal_{g(t)}(x_0)=0.$$ 
\end{proof}

\section{An improved gap theorem}

\begin{thm}\label{thm:new-gap-dyn}
Suppose $(M^n,g_0)$, $n\geq 3$, is a complete manifold  that admits an immortal complete  Ricci flow $(M^n,g(t))$ for $t\in [0,\infty)$ starting from $g_0$ such that
\begin{enumerate}[(i)]
    \item \label{gap_i} $\mathcal{R}_{g(t)}  \in  \CPICo$ for all $t\in [0,+\infty)$;
    \item \label{gap_ii} $|K|_{g(t)}\leq c_0 t^{-1}$ for some $c_0>1$ and for all $t\in (0,+\infty)$;
    \item \label{gap_iii} $\liminf_{t\to+\infty} t\, \Scal_{g(t)}(x_1)=0$ for some $x_1\in M$,
\end{enumerate}
then $g_0$ is flat.
\end{thm}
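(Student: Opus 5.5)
The plan is to reduce to Part~\eqref{part_i_LFL} of Theorem~\ref{lifted_flow_app}, using a splitting argument to manufacture a point of strictly positive Ricci curvature. Without pinching, PIC1 plus non-flatness only gives $\Scal>0$ somewhere, not $\Ric>0$, so this reduction is where the work lies.

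\textbf{Step 1: setup and the contrapositive.} By hypothesis~\eqref{gap_i} and Remark~\ref{pinch_trace}, we have $\Ric_{g(t)}\geq 0$ for all $t\geq 0$. Suppose, for a contradiction, that $g_0$ is not flat. Then $|\calr|_{g_0}(y)>0$ at some $y$, so \cite[Lemma A.2]{LeeTopping2022_PIC1} gives $\Scal_{g_0}(y)>0$. From the evolution equation~\eqref{scal_evol_eq} and the strong maximum principle, $\Scal_{g(t)}>0$ everywhere on $M$ for all $t>0$. Conversely, if $g(t)$ were flat for all $t>0$, smooth convergence as $t\downarrow 0$ would make $g_0$ flat. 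So it suffices to derive a contradiction from $\Scal_{g(t)}>0$ for $t>0$.

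\textbf{Step 2: shift time and split off the Ricci null directions.} I replace the flow by the time-shifted flow $\tilde g(t):=g(t+1)$, $t\geq 0$. It is complete, still satisfies $\calr\in\CPICo$ and $\Ric\geq 0$, and satisfies $|K|_{\tilde g(t)}\leq c_0/(t+1)\leq c_0/t$. Moreover $\liminf_{t\to\infty} t\,\Scal_{\tilde g(t)}(x_1)=0$ still holds, since $t/(t+1)\to 1$. Now $\Ric\geq 0$ is preserved and the flow is immortal with bounded curvature for $t>0$, so I invoke Hamilton's strong maximum principle for the Ricci tensor. For $t>0$ the null space of $\Ric$ is a parallel distribution of constant rank $k$, invariant in time. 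Pass to the universal cover $\widetilde M$, lifting $x_1$ to $\tilde x_1$; all hypotheses are local or survive lifting, and completeness is preserved. By de Rham, the flow splits isometrically as $\R^k\times N^{n-k}$, with a static flat factor and a Ricci flow $(N,g_N(t))$ with $\Ric_{g_N}>0$ everywhere. The identification of the Ricci-flat factor as exactly flat uses PIC1 together with \cite[Lemma A.2]{LeeTopping2022_PIC1}. Since $\Scal>0$, we have $k<n$; also $k\neq n-1$, because one-dimensional factors carry no curvature.

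\textbf{Step 3: apply Theorem~\ref{lifted_flow_app} to the non-flat factor.} The factor $(N,g_N(t))$ is complete and satisfies~\eqref{lifted_flow_app_part_a} and~\eqref{lifted_flow_app_part_b} with the same $c_0$, up to a dimensional constant relating $|K|$ across the product. At the projection $y_1$ of $\tilde x_1$ it has $\Ric_{g_N(0)}(y_1)>0$. Since $\Scal_{\tilde g(t)}(\tilde x_1)=\Scal_{g_N(t)}(y_1)$, we also have $\liminf_{t\to\infty} t\,\Scal_{g_N(t)}(y_1)=0$.
\begin{itemize}
\item If $\dim N\geq 3$, this contradicts Part~\eqref{part_i_LFL} of Theorem~\ref{lifted_flow_app}.
\item If $\dim N=2$, I would instead use Hamilton's Harnack inequality, which applies since $\Scal>0$ there, as noted after Theorem~\ref{lifted_flow_app}. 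Alternatively, I would rerun the proof of Part~\eqref{part_i_LFL}; the lifted flow lemma and the volume suppression argument work verbatim in dimension two.
\end{itemize}
This contradiction shows $g_0$ is flat.

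\textbf{Main obstacle.} The expected difficulty is Step 2: justifying the strong maximum principle and splitting for the Ricci tensor on a complete, possibly non-compact immortal flow, and ensuring the split factor $N$ still meets the hypotheses of Theorem~\ref{lifted_flow_app}. If that splitting becomes too technical, my first fallback is more direct. Rerun the proof of Theorem~\ref{lifted_flow_app} at $x_1$: the limit $H_\infty$ is then static, Ricci flat and PIC1, hence flat. I would then look for an $\eta$-Ricci ball along the blow-downs by exploiting $\Ric>0$ in the non-null directions at $x_1$, restricting the volume comparison to the $N$-directions.
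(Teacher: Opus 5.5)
Your overall reduction is viable and differs from the paper's: split off the null directions of $\Ric$, then apply Part \eqref{part_i_LFL} of Theorem \ref{lifted_flow_app} to the complementary factor, or Hamilton's Harnack inequality if that factor is a surface. However, the step that carries all the weight is not justified as written.

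You invoke Hamilton's strong maximum principle for the Ricci tensor on the grounds that $\Ric\geq 0$ is preserved. That principle needs the reaction term $N_{ab}=2R_{acbd}\Ric_{cd}$ in $(\partial_t-\Delta)\Ric_{ab}=N_{ab}$ (Uhlenbeck frame) to satisfy a null-eigenvector condition. In dimension three this holds because the curvature is determined by $\Ric$. For $n\geq 4$, $N$ involves the full curvature tensor, and knowing $\Ric\geq 0$ along the flow gives no lower bound on $N(v,v)$ when $\Ric(v,v)=0$. So this is exactly where PIC1 must enter in an essential way. Your write-up uses PIC1 only to get $\Ric\geq 0$ and to see that the Ricci-flat factor is flat. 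The obstacle you flagged (non-compactness, splitting) is harmless: the argument is local, and uniqueness of bounded-curvature flows propagates a product structure.

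The hole can be filled as follows. For $n\geq 4$, weak PIC1 gives $K(v,a)+K(v,b)\geq 0$ for orthonormal $v,a,b$. Suppose $K(v,a)=-\kappa<0$, and complete $a$ to an orthonormal basis $a,b_1,\dots,b_{n-2}$ of $v^\perp$. Then $\Ric(v,v)\geq -\kappa+(n-2)\kappa=(n-3)\kappa$. Hence every plane containing the unit vector $v$ has sectional curvature at least $-\Ric(v,v)/(n-3)$. In an eigenframe $\{e_c\}$ of $\Ric$ with eigenvalues $\lambda_c\geq 0$, this gives
\[
N(v,v)=2\sum_c\lambda_c\,R(v,e_c,v,e_c)\;\geq\;-\frac{2\Scal}{n-3}\,\Ric(v,v).
\]
This is the robust null-eigenvector condition Hamilton's argument needs, and its consequences are as follows.
\begin{itemize}
\item Sums of the lowest eigenvalues of $\Ric$ become supersolutions of $(\partial_t-\Delta)\phi\geq -C\phi$.
\item Hence the nullity is constant in space and nonincreasing in time.
\item At any $t_*>0$ where the nullity is locally constant in time, a smooth section $v$ of the kernel satisfies $2\Ric(\nabla_iv,\nabla_iv)=-N(v,v)\leq 0$, so the kernel is parallel.
\end{itemize}
You should split at such a $t_*$, which need not be $t=1$, and use uniqueness for later times.

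With this added, your proof works and avoids much of the paper's machinery. The paper proceeds in several stages:
\begin{itemize}
\item It takes irreducible de Rham factors and rules out symmetric, Einstein and Ricci-flat cases.
\item It uses Berger's classification to reduce to holonomy $\mathrm{SO}(n)$ or $\mathrm{U}(n/2)$.
\item It applies the Brendle--Schoen strong maximum principle for PIC1 on $(M,g(t))\times S^1$, together with transitivity of the holonomy and a separate K\"ahler computation, to force $\Ric>0$ everywhere.
\end{itemize}
You replace all of this with a direct strong maximum principle for $\Ric$, at the cost of verifying the reaction-term inequality yourself.
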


\begin{proof}
Let's suppose that $g_0$ is \emph{not} flat, and aim for a contradiction.
By assumption \eqref{gap_i} we have $\Scal_{g(t)}\geq 0$ throughout.
We may assume $\Scal_{g(t)}>0$ for all $t>0$ since otherwise
the strong maximum principle implies $\Scal_{g(t)}\equiv 0$, 
which then implies the  flatness of $g(t)$, including $g_0$, by assumption \eqref{gap_i} (see \cite[Lemma A.2]{LeeTopping2022_PIC1}) giving a contradiction. 
By lifting $M$ to its universal cover, we may assume that $M$ is simply connected. 
Fix $s_0>0$ and consider the de-Rham decomposition \cite{deRham1952}
\begin{equation}
    (M,g(s_0))=\prod_{i=1}^N (M_i,g_i)
\end{equation}
of $(M,g(s_0))$ into irreducible components.
By the existence and uniqueness of Ricci flow in the bounded curvature case 
\cite{shi1989, ChenZhu}, $(M,g(t))$ splits isometrically as $\prod_{i=1}^N (M_i,g_i(t))$ for all $t\geq s_0$. 
For each $i$ with $\dim(M_i)\geq 3$, we have $\calr_{g_i(t)}\in\CPICo$, while if 
$\dim(M_i)=2$ then $g_i(t)$ has non-negative Gauss curvature.
Assumptions \eqref{gap_ii} and \eqref{gap_iii} carry over to each Ricci flow 
$t\mapsto g_i(s_0+t)$.
Since $\Scal_{g(s_0)}>0$, there exists $i_0\in \{1,\ldots,N\}$ such that $\Scal_{g_{i_0}(s_0)}>0$ somewhere on $M_{i_0}$. By the strong maximum principle, we have $\Scal_{g_{i_0}(t)}>0$ on $M_{i_0}\times (s_0,\infty)$. 
We must have $\mathrm{dim}(M_{i_0})\geq 3$ because if $\dim(M_{i_0})=2$ then 
Hamilton's Harnack inequality \cite{Ham_RF_Harnack} will contradict assumption \eqref{gap_iii}, as discussed in Section \ref{pinch_thm_sect}. 
Also, $M_{i_0}$ must be non-compact, otherwise the positivity of the scalar curvature of 
$g_{i_0}(t)$ will force blow-up in finite time.
Thus we can replace $(M,g(t))$ with $(M_{i_0},g_{i_0}(s_0+t))$, and the new $(M,g(t))$
satisfies all the hypotheses of Theorem \ref{thm:new-gap-dyn}, 
but additionally has positive scalar curvature curvature for $t>0$,
is simply connected, and has initial 
data $(M,g(0))$ that is irreducible, which will help us derive a contradiction.

Fix $t_0>0$ sufficiently small so that $(M,g(t_0))$ remains irreducible. 
We may further assume that $\left(M,g(t_0)\right)$ is non-symmetric, since otherwise $\Scal_{g(t_0)}\equiv c_1$ on $M$ for some constant $c_1> 0$,
which would force blow up in finite time as above.
Thus, we must have $\left(M,g(t_0)\right)$ is simply connected, non-symmetric and irreducible. By Berger’s holonomy classification theorem \cite{Berger}, $\mathrm{Hol}(M,g(t_0))$ is either $\mathrm{SO}(n)$ or possibly if $n$ is even, $\mathrm{U}(n/2)$. This is because the other options would be Ricci flat  
(and hence flat by assumption \eqref{gap_i})
or Einstein (and hence with $\Scal_{g(t_0)}\equiv c_1>0$ as above).

We claim that $\Ric_{g(t_0)}(x)>0$ for all $x\in M$. Otherwise, there exist $x_0\in M$
and $v\in T_{x_0}M$ with $|v|=1$ such that $\Ric(v,v)=0$ at $(x_0,t_0)$. Suppose $\mathrm{Hol}(M,g(t_0))=\mathrm{SO}(n)$.
We let $\{e_i\}_{i=1}^n$ be an orthonormal frame at $(x_0,t_0)$ such that $e_1=v$. 
By the (weakly) 
PIC1 assumption \eqref{gap_i},  we deduce from $\Ric(e_1,e_1)=\sum_{i=2}^n \calr(e_1,e_i,e_1,e_i)=0$ that 
\beql{two_sectionals}
\calr(e_1,e_i,e_1,e_i)+\calr(e_1,e_j,e_1,e_j)=0
\eeq
for all $j,i>1$ with $i\neq j$. 
This is because we can write the Ricci curvature $\Ric(e_1,e_1)$ as the sum of 
terms of the form in \eqref{two_sectionals},
each of which is non-negative by virtue of the (weakly) PIC1 condition;
see \cite[Section 2]{PIC1_survey}. 
Mimicking the proof of \cite[Proposition 9]{BS-2}, by applying \cite[Proposition 8]{BS-2} to $(M,g(t))\times \mathbb{S}^1$, we find that \eqref{two_sectionals} is invariant under the action of $\mathrm{SO}(n)$ on $\{e_1,\ldots e_n\}$. Therefore for all $i,j,k$ distinct
we have 
$$\calr(e_k,e_i,e_k,e_i)+\calr(e_k,e_j,e_k,e_j)=0.$$
By summing over $i,j$ with $i,j,k$ pairwise distinct, this implies $\Ric(e_k,e_k)=0$ for all $1\leq k\leq n$. This contradicts $\mathrm{Scal}_{g(t_0)}(x_0)>0$.

It remains to consider the case $\mathrm{Hol}(M,g(t_0))=\mathrm{U}(n/2)$, where now $g(t_0)$ is K\"ahler.
The argument is similar. We fix an orthonormal frame $\{e_i,Je_i\}_{i=1}^{n/2}$ such that $e_1:=v$. 
Thanks to  the (weakly) PIC1 condition \eqref{gap_i} and $\Ric(e_1,e_1)=0$, we have 
\begin{equation}
\left\{
\begin{array}{ll}
    \calr(e_1,Je_1,e_1,Je_1)+\calr(e_1,e_k,e_1,e_k)= 0;\\[1mm]
  \calr(e_1,Je_1,e_1,Je_1)+\calr(e_1,Je_k,e_1,Je_k)=0;\\[1mm]
  \calr(e_1,e_k,e_1,e_k)+\calr(e_1,Je_k,e_1,Je_k)=0
\end{array}
    \right.
\end{equation}
for all $k>1$. Consider the transformation of $T_{x_0}M$ given by $(e_1,Je_1,e_k,Je_k)\mapsto (e_k,Je_k,e_i,Je_i)$ for $k>1$ and $i\neq k$, which is an element of $\mathrm{U}(n/2)$. We apply \cite[Proposition 8]{BS-2} to $(M,g(t))\times \mathbb{S}^1$ again as in the case of $\mathrm{Hol}(M,g(t_0))=\mathrm{SO}(n)$, with now $\mathrm{Hol}(M,g(t_0))=\mathrm{U}(n/2)$, to conclude that 
\begin{equation}
\left\{
\begin{array}{ll}
    \calr(e_k,Je_k,e_k,Je_k)+\calr(e_k,e_i,e_k,e_i)= 0;\\[1mm]
  \calr(e_k,Je_k,e_k,Je_k)+\calr(e_k,Je_i,e_k,Je_i)=0;\\[1mm]
  \calr(e_k,e_i,e_k,e_i)+\calr(e_k,Je_i,e_k,Je_i)=0
\end{array}
    \right.
\end{equation}
for $i\neq k$. Hence, 
$$\calr(e_k,Je_k,e_k,Je_k)= \calr(e_k,Je_i,e_k,Je_i)=\calr(e_k,e_i,e_k,e_i) =0$$ 
for all $1\leq k\leq n/2$ and $i\neq k$. Thus,
\begin{equation}
    \Ric(e_k,e_k):= \calr(e_k,Je_k,e_k,Je_k)+\sum_{i\neq k}\calr(e_k,Je_i,e_k,Je_i)+\calr(e_k,e_i,e_k,e_i)=0. 
\end{equation}
Since $\Ric$ is $J$-invariant from K\"ahlerity, we conclude that 
$\Scal_{g(t_0)}(x_0)=0$ which is impossible. 
To summarize, we have shown that $\Ric_{g(t_0)}(x)>0$ for all $x\in M$. But this  implies  conclusion \eqref{part_i_LFL}
of Theorem~\ref{lifted_flow_app} must hold, which contradicts  assumption \eqref{gap_iii} of the theorem we are proving. 
%
\end{proof}

In \cite{ChanLee2025}, a gap phenomenon for Riemannian manifolds with non-negative complex sectional curvature was obtained by studying the long-time behaviour of the Ricci flow, based on Brendle's Harnack inequality \cite{Brendle2009}. As an application of  Theorem \ref{thm:new-gap-dyn}, we improve the gap Theorem in \cite{ChanLee2025} to the weaker curvature condition PIC1.
\begin{proof}[Proof of Theorem~\ref{thm:gap}]
For $\mathcal{R}_{g_0}  \in  \CPICt$ instead of $\CPICo$ the result was shown in \cite[Theorem 1.1]{ChanLee2025}. To extend the result to $\mathcal{R}_{g_0}  \in  \CPICo$ we follow the strategy in \cite[Theorem 1.1]{ChanLee2025}, but replacing the implications of Brendle's Harnack inequality by Theorem~\ref{thm:new-gap-dyn}. 

By \cite[Proposition 5.1]{ChanLee2025}, provided $\e_n$ is sufficiently small, there exists a solution to Ricci flow $g(t),t\in [0,+\infty)$ starting from $g_0$ and $c_0(n)>0$ such that 
\begin{enumerate}[(i)]
    \item \label{1_10_i} $\mathcal{R}_{g(t)}  \in  \CPICo$;
    \item \label{1_10_ii} $|K|_{g(t)}\leq c_0 t^{-1}$,
\end{enumerate}
for all $(x,t)\in M\times (0,+\infty)$. We claim that 
\begin{equation}\label{eqn:claim}
    \liminf_{t\to+\infty} t\, \Scal_{g(t)}(x_0)=0
\end{equation}
for some $x_0\in M$. Assuming \eqref{eqn:claim}, Theorem~\ref{thm:new-gap-dyn} implies then that $(M,g_0)$ is flat.  

The claim \eqref{eqn:claim} follows directly from the proof of \cite[Theorem 1.1]{ChanLee2025}. We outline the steps. By \cite[Lemma 3.2]{ChanLee2025}, $n\geq 4$ and 
\eqref{1_10_i}, $\Scal_{g(t)}$ satisfies $$\left(\partial_t-\Delta_{g(t)} - \Scal_{g(t)}\right) \Scal_{g(t)}(x_0)\leq 0$$ and hence by the maximum principle, thanks to the bounded curvature for $t>0$,
\begin{equation*}
    \Scal_{g(t)}(x_0)\leq \int_M G(x_0,t;y,0) \, \Scal_{g_0} (y)\,d\mathrm{vol}_{g_0}(y)\, ,
\end{equation*}
 where $G(x,t;y,s)$ denotes the heat kernel of the operator $\partial_t-\Delta_{g(t)}-\Scal_{g(t)}$ along the Ricci flow $g(t)$. Owing to the curvature bound \eqref{1_10_ii}, the heat kernel $G(x,t;y,s)$ satisfies the Li-Yau type estimate in \cite[Lemma 3.3]{ChanLee2025}. Hence, the derivation of \cite[(5.14)]{ChanLee2025} can be carried over verbatim to conclude that for any $x_0\in M,\delta>0$, there exists $\Lambda,r_0>0$ so that for all $t>r_0^2$, 
\begin{equation*}
    t \,    \Scal_{g(t)}(x_0) \leq C_1\delta +\frac{C_1 r_0^{n+2}\Lambda t^{-1/2}}{\VolB_{g_0}(x_0,1)}
\end{equation*}
for some dimensional constant $C_1$.
By taking $\liminf_{t\to +\infty}$  followed by $\delta\downto 0$, this proves \eqref{eqn:claim}. 
\end{proof}


\noindent

AD: 
\url{alix.deruelle@universite-paris-saclay.fr}\\
\noindent
{\sc Universit\'e Paris-Saclay, CNRS, Laboratoire de math\'ematiques d’Orsay, 91405, Orsay, France}

\noindent
MCL: 
\url{mclee@math.cuhk.edu.hk}\\
\noindent
{\sc Department of Mathematics, The Chinese University of Hong Kong, Shatin,
N.T., Hong Kong}

\noindent
FS:
\url{felix.schulze@warwick.ac.uk}\\
\noindent
{\sc Mathematics Institute, University of Warwick, Coventry,
CV4 7AL, UK.}

\noindent
MS: 
\url{miles.simon@ovgu.de}\\
\noindent
{\sc Institut f\"ur Analysis und Numerik, Otto-von-Guericke-Universit\"at Magdeburg, 
universit\"atsplatz 2, Magdeburg 39106, Germany}

\noindent
PT: 
\url{http://warwick.ac.uk/fac/sci/maths/people/staff/peter\_topping/}\\
\noindent
{\sc Mathematics Institute, University of Warwick, Coventry,
CV4 7AL, UK.}

\end{document}